 \newtheorem{Th}{Theorem}[section]
 \newtheorem{Cor}[Th]{Corollary}
 \newtheorem{Lem}[Th]{Lemma}
 \theoremstyle{definition}
 \newtheorem{Def}[Th]{Definition}
 \theoremstyle{remark}
 \newtheorem{Rem}[Th]{Remark}
 \numberwithin{equation}{section}
 \newcommand{\R}{\mathbb{R}}
\newcommand{\Z}{\mathbb Z}
\newcommand{\cC}{{\mathcal C}}
\newcommand{\cF}{{\mathcal F}}
\newcommand{\cJ}{{\mathcal J}}
\newcommand{\cN}{{\mathcal N}}
\newcommand{\weakto}{\rightharpoonup}
\newcommand{\Ga}{\Gamma}
\DeclareMathOperator*{\supp}{supp}
\begin{document}

%
%
%
%
%
%
%
%
%

\title[Non-local to local transition]
 {Non-local to local transition for ground states of fractional
  Schr\"{o}dinger equations on $\R^N$}

\author[Bieganowski]{Bartosz Bieganowski}

\address{%
Nicolaus Copernicus
  University \\ Faculty of Mathematics and Computer Science \\ ul. Chopina
  12/18, 87-100 Toru\'n, Poland}

\email{bartoszb@mat.umk.pl}

\author[Secchi]{Simone Secchi (corresponding author)}
\address{Dipartimento di
  Matematica e Applicazioni \\ Universit\`a degli Studi di
  Milano-Bicocca \\
  via Roberto Cozzi 55, I-20125, Milano, Italy}
\email{simone.secchi@unimib.it}
\subjclass{Primary 35Q55; Secondary 35A15, 35R11}

\keywords{variational methods, fractional
Schr\"odinger equation, non-local to local transition, ground state,
Nehari manifold.}

\date{\today}

\begin{abstract}
  We consider the nonlinear fractional problem
  \begin{align*}
    (-\Delta)^{s} u + V(x) u = f(x,u) &\quad \hbox{in $\R^N$}
\end{align*}
We show that ground state solutions converge (along a subsequence) in
$L^2_{\mathrm{loc}} (\R^N)$, under suitable conditions on $f$ and $V$,
to a ground state solution of the local problem as $s \to 1^-$.
\end{abstract}

\maketitle

\section{Introduction}

The aim of this paper is to analyse the asymptotic behavior of
least-energy solutions to the fractional Schr\"odinger problem
\begin{equation} \label{eq:1.1}
  \begin{cases}
    (-\Delta)^{s} u + V(x) u = f(x,u) &\quad \hbox{in $\mathbb{R}^N$} \\
    u \in H^s (\R^N),
\end{cases}
\end{equation}
under suitable assumptions on the scalar potential
$V \colon \mathbb{R}^N \to \mathbb{R}$ and on the nonlinearity
$f \colon \mathbb{R}^N \times \mathbb{R} \to \mathbb{R}$.  We recall
that the fractional laplacian is defined as the principal value of a
singular integral via the formula
\begin{align*}
  (-\Delta)^s u (x) = C(N,s) \lim_{\varepsilon \to 0} \int_{\mathbb{R}^N \setminus B_\varepsilon(x)} \frac{u(x)-u(y)}{|x-y|^{N+2s}}\, dy
\end{align*}
with
\begin{align*}
  \frac{1}{C(N,s)} = \int_{\mathbb{R}^N} \frac{1-\cos \zeta_1}{|\zeta|^{N+2s}}\, d\zeta_1 \cdots d\zeta_N.
\end{align*}
This formal definition needs of course a function space in which
problem \eqref{eq:1.1} becomes meaningful: we will come to this issue
in Section \ref{sect:variational-setting}.

\medskip

Several models have appeared in recent years that involve the use of
the fractional laplacian. We only mention elasticity, turbulence,
porous media flow, image processing, wave propagation in heterogeneous
high contrast media, and stochastic models: see \cite{B,DPV,V,G}.

\medskip

Instead of \emph{fixing} the value of the parameter $s \in (0,1)$, we
will start from the well-known identity (see \cite[Proposition
4.4]{DNPV})
\begin{align}
\lim_{s \to 1^-} (-\Delta)^s u = -\Delta u \label{eq:1.1.0}
\end{align}
valid for any $u \in C_0^\infty(\mathbb{R}^N)$, and investigate the
convergence properties of solutions to \eqref{eq:1.1} as $s \to 1^-$.

In view of \eqref{eq:1.1.0}, it is somehow natural to conjecture that
solutions to \eqref{eq:1.1} converge to solutions of the problem
\begin{align} \label{eq:1.2}
\begin{cases}
-\Delta u + V(x) u = f(x,u) &\hbox{in $\R^N$,} \\
u  \in H^1 (\R^N).
\end{cases}
\end{align}
We do not know if this conjecture is indeed correct with this degree
of generality. 

In this paper we will always assume that both $V$ and $f$ are $\mathbb{Z}^N$-periodic in the space variables. Hence equations \eqref{eq:1.1} and \eqref{eq:1.2} are
invariant under $\mathbb{Z}^N$-translations, and their solutions are
not unique.
We will prove that --- up to $\Z^N$-translations and along
a subsequence --- \emph{least-energy} solutions of \eqref{eq:1.1}
converge to a ground state solution to the local problem
\eqref{eq:1.2}. Our result is a continuation of the previours paper
\cite{BS}, in which we consider the equation on a bounded domain and
extend the very recent analysis of Biccari \emph{et al.}
(see~\cite{BHS}) in the \emph{linear} case for the Poisson problem to
the semilinear case. See also \cite{BC}.

\medskip

We collect our assumptions.
\begin{itemize}
\item[(N)] $N \geq 3$, $1/2 < s < 1$;
\item[(V)] $V \in L^\infty(\R^N)$ is $\Z^N$-periodic and $\inf_{\R^N} V >  0$;
\item[(F1)] $f \colon \R^N \times \mathbb{R} \to \mathbb{R}$ is a
  Carath\'eodory function, namely $f(\cdot, u)$ is measurable for any
  $u \in \R$ and $f(x, \cdot)$ is continuous for a.e. $x \in
  \R^N$. Moreover $f$ is $\Z^N$-periodic in $x \in \R^N$ and there are
  numbers $C > 0$ and $p \in \left(2, \frac{2N}{N-1} \right)$ such
  that
\begin{align*}
|f(x,u)| \leq C (1 + |u|^{p-1})
\end{align*}
for $u \in \R$ and a.e. $x \in \R^N$.
\item[(F2)] $f(x,u)=o(u)$ as $u \to 0$, uniformly with respect to $x
  \in \R^N$.
\item[(F3)] $\lim_{|u| \to +\infty} \frac{F(x,u)}{u^2}=+\infty$
  uniformly with respect to $x \in \R^N$, where
  $F(x,u)=\int_0^u f(x,s)\, ds$.
\item[(F4)] The function $\R \setminus \{0\} \ni u \mapsto f(x,u)/u$
  is strictly increasing on $(-\infty, 0)$ and on $(0, \infty)$, for
  a.e. $x \in \R^N$.
\end{itemize}

\begin{Rem} \label{rem:1.1}
  It follows from (F1) and (F2) that for
  every~$\varepsilon > 0$ there is~$C_\varepsilon > 0$ such that
\[
|f(x,u)| \leq \varepsilon |u| + C_\varepsilon |u|^{p-1}
\]
for every~$u \in \mathbb{R}$ and a.e~$x \in \R^N$.  Furthermore,
assumption~(F4) implies the validity of the inequality
\begin{align*}
0 \leq 2 F(u) \leq f(x,u)u
\end{align*}
for every~$u \in \R$ and a.e.~$x \in \R^N$.
\end{Rem}
We can now state our main result. 
\begin{Th}\label{th:main}
  Suppose that assumptions~(N), (V), (F1)--(F4) hold. Let
  $u_s \in H^s(\R^N)$ be a ground state solution of problem
  \eqref{eq:1.1}. Then, there exists a sequence
  $\{s_n\}_n \subset (1/2, 1)$ such that $s_n \to 1$ as
  $n \to + \infty$ and there exists a sequence of translations
  $\{z_n\}_n$ such that $u_{s_n}(\cdot - z_n)$ converges in
  $L^2_{\mathrm{loc}}(\R^N)$ to a ground state solution
  $u_0 \in H^1(\R^N)$ of the problem \eqref{eq:1.2}.
\end{Th}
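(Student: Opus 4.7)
The plan is to work with the standard Nehari manifold approach, set up an energy comparison between the fractional and local problems, establish uniform bounds that transfer across $s$, and then combine a concentration-compactness argument (based on the $\mathbb{Z}^N$-periodicity) with a Bourgain–Brezis–Mironescu style limit argument.

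First I would introduce the functionals $J_s$ and $J_0$ with the same nonlinear/potential part and their Nehari manifolds $\mathcal{N}_s$, $\mathcal{N}_0$; assumptions (F1)–(F4) place us in the classical Nehari framework so that the ground-state level $c_s$ is attained. A crucial preliminary is the \emph{uniform energy cap}: fixing a ground state $u_0^{\mathrm{loc}}$ of \eqref{eq:1.2} (whose existence follows from the local version of the same variational theory), there is a unique $t_s>0$ with $t_s u_0^{\mathrm{loc}}\in \mathcal{N}_s$; using the BBM-type normalisation built into $C(N,s)$ one shows $t_s\to 1$ and $J_s(t_s u_0^{\mathrm{loc}})\to J_0(u_0^{\mathrm{loc}})$, hence $\limsup_{s\to 1^-}c_s\le c_0$. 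Together with the Nehari identity $J_s(u)=\int_{\R^N}\bigl(\tfrac12 f(x,u)u-F(x,u)\bigr)\,dx$ and Remark \ref{rem:1.1}, this yields uniform bounds on $\|u_s\|_{L^2}$, $\|u_s\|_{L^p}$, and on the scaled Gagliardo seminorm $\frac{C(N,s)}{2}[u_s]_{H^s}^2$, which is the correct object to control by BBM.

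Next I would rule out vanishing. If $\sup_{z\in \R^N}\int_{B_1(z)}|u_{s_n}|^2\,dx\to 0$, a fractional version of Lions' lemma (uniform in $s\in[1/2,1]$) gives $u_{s_n}\to 0$ in $L^p(\R^N)$, but then the Nehari identity forces $c_{s_n}\to 0$, contradicting a positive lower bound on $c_{s_n}$ that one gets from (F1)–(F2). By $\Z^N$-periodicity of $V$ and $f$ the translates $\tilde u_n(x):=u_{s_n}(x-z_n)$ with $z_n\in\Z^N$ are still ground states of \eqref{eq:1.1} with the same norm. Choosing $z_n$ to capture a non-vanishing concentration ball, I extract a subsequence with $\tilde u_n\rightharpoonup u_0$ weakly in $L^2$, strongly in $L^2_{\mathrm{loc}}$ (via a uniform fractional Rellich bound), and a.e., with $u_0\not\equiv 0$. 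That $u_0\in H^1(\R^N)$ follows from a BBM-type lower semicontinuity: $\|\nabla u_0\|_{L^2}^2\le \liminf_n \frac{C(N,s_n)}{2}[\tilde u_n]_{H^{s_n}}^2$, which is already known to be bounded.

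To pass to the limit in the equation, I test against $\varphi\in C_c^\infty(\R^N)$ and use the symmetrisation
\[
\tfrac{C(N,s_n)}{2}\iint_{\R^{2N}}\frac{(\tilde u_n(x)-\tilde u_n(y))(\varphi(x)-\varphi(y))}{|x-y|^{N+2s_n}}\,dx\,dy = \int_{\R^N}\tilde u_n\,(-\Delta)^{s_n}\varphi\,dx.
\]
Since $\varphi\in\mathcal{S}$, Plancherel and dominated convergence give $(-\Delta)^{s_n}\varphi\to-\Delta\varphi$ in $L^2(\R^N)$ (the Fourier multipliers $|\xi|^{2s_n}\hat\varphi$ converge to $|\xi|^2\hat\varphi$ with a Schwartz-type dominant), so weak $L^2$ convergence of $\tilde u_n$ to $u_0$ yields the passage to $\int\nabla u_0\cdot\nabla\varphi\,dx$. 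The potential term passes by weak $L^2$ convergence, and the nonlinear term by a.e. convergence together with the growth bound from Remark \ref{rem:1.1} and $L^p_{\mathrm{loc}}$ convergence on the support of $\varphi$. Hence $u_0$ solves \eqref{eq:1.2}, and in particular $J_0(u_0)\ge c_0$. Finally, I apply Fatou's lemma to the nonnegative integrand $G(x,u):=\frac12 f(x,u)u-F(x,u)$, giving
\[
c_0\le J_0(u_0)=\int_{\R^N}G(x,u_0)\,dx\le \liminf_n\int_{\R^N}G(x,\tilde u_n)\,dx = \liminf_n c_{s_n}\le c_0,
\]
so $u_0$ is a ground state. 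The main obstacle is keeping the Gagliardo term under control as both $u$ and $s$ vary: verifying the $s$-uniform fractional Lions lemma and the $L^2$ convergence $(-\Delta)^{s_n}\varphi\to-\Delta\varphi$ with sufficient decay is the technical heart of the argument, together with arranging that the ground-state property is preserved in the limit via the nonnegativity of $G$.
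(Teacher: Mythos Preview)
Your overall strategy---energy comparison via the Nehari projection of a local ground state, a uniform-in-$s$ Lions concentration argument, passage to the limit in the weak formulation through $\int \tilde u_n\,(-\Delta)^{s_n}\varphi\,dx$ with $(-\Delta)^{s_n}\varphi\to -\Delta\varphi$ in $L^2$, and the final Fatou step on $G(x,u)=\tfrac12 f(x,u)u-F(x,u)$---is exactly the route the paper takes.

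There is, however, one genuine gap. You assert that the energy cap $\limsup_{s\to1^-}c_s\le c_0$ ``together with the Nehari identity $J_s(u)=\int_{\R^N}G(x,u)\,dx$ and Remark~\ref{rem:1.1}'' yields uniform bounds on $\|u_s\|_{L^2}$, $\|u_s\|_{L^p}$ and the Gagliardo seminorm. It does not: those ingredients only give $0\le \int_{\R^N}G(x,u_s)\,dx\le C$, and since no Ambrosetti--Rabinowitz condition is assumed there is no inequality of the type $G(x,u)\ge \delta f(x,u)u$ or $G(x,u)\ge \delta u^2$ that would let you recover $\|u_s\|_s^2=\int f(x,u_s)u_s\,dx$ from a bound on $\int G$. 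The paper closes this via a contradiction argument that uses (F3) in an essential way (its Lemma on boundedness): assuming $\|u_{s_n}\|_{s_n}\to\infty$, one normalises $v_{s_n}=u_{s_n}/\|u_{s_n}\|_{s_n}$ and runs the uniform Lions dichotomy on $\{v_{s_n}\}$; in the vanishing branch $c_{s_n}\ge J_{s_n}(tv_{s_n})\to t^2/2$ for every $t>0$, while in the non-vanishing branch the superquadraticity $F(x,u)/u^2\to+\infty$ on the support of the weak limit forces $c_{s_n}/\|u_{s_n}\|_{s_n}^2\to-\infty$. You never invoke (F3), yet without it (or something equivalent) the boundedness claim is unjustified, and the extraction of the $L^2_{\mathrm{loc}}$-convergent subsequence, the $H^1$ membership of the limit, and the Fatou comparison all rest on it. Once you insert this step, the remainder of your outline is correct and coincides with the paper's argument.
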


\section{The variational setting}\label{sect:variational-setting}

In this section we collect the basic tools from the theory of
fractional Sobolev spaces we will need to prove our results. For a
thorough discussion, we refer to \cite{MBRS,DNPV} and to the
references therein.

\medskip

For $0<s<1$, we define a Sobolev space on $\R^N$ as
\begin{align*}
  H^s(\R^N)= \left\{ u \in L^2(\R^N) \mid \int_{\R^N \times \R^N} \frac{|u(x)-u(y)|^2}{|x-y|^{N+2s}} \, dx\, dy < + \infty \right\},
\end{align*}
endowed with the norm
\begin{align*}
  \|u\|_{H^s(\R^N)}^2 = \|u\|_{L^2(\R^N)}^2 + \int_{\R^N \times \R^N} \frac{|u(x)-u(y)|^2}{|x-y|^{N+2s}} \, dx\, dy
\end{align*}
One can show that $C_0^\infty(\R^N)$ is dense in $H^s(\R^N)$. For
$u \in H^s(\R^N)$, an equivalent norm of $u$ is (see \cite[Proposition
1.18]{MBRS})
\begin{align*}
	u \mapsto \left( \|u\|_{L^2(\R^N)}^2 + \left\| (-\Delta)^{\frac{s}{2}} u \right\|_{L^2(\mathbb{R}^N)}^2 \right)^{1/2}.
\end{align*}
More explicitly, for every $u \in H^s(\R^N)$,
\begin{align*}
\int_{\mathbb{R}^N \times \mathbb{R}^N} \frac{|u(x)-u(y)|^2}{|x-y|^{N+2s}}\, dx\, dy = \frac{2}{C(N,s)} \left\| (-\Delta)^{s/2} u \right\|^2_{L^2(\mathbb{R}^N)},
\end{align*}
where
\begin{align*}
C(N,s) &=\frac{s(1-s)}{A(N,s)B(s)}, \\
A(N,s) &= \int_{\mathbb{R}^{N-1}} \frac{d \eta}{(1+|\eta|^2)^{(N+2s)/2}}, \\
B(s) &= s (1-s) \int_{\mathbb{R}} \frac{1-\cos t}{|t|^{1+2s}} \, dt.
\end{align*}
\begin{Lem}
  For every $u \in H^1(\mathbb{R}^N)$, there results
  \begin{align*}
    \lim_{s \to 1^-} \left\| (-\Delta)^{s/2} u
    \right\|_{L^2(\mathbb{R}^N)}^2 = \left\| \nabla u
    \right\|_{L^2(\mathbb{R}^N)}^2.
  \end{align*}
\end{Lem}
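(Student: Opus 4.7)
The cleanest route is via the Fourier transform. By definition of the fractional Laplacian on $H^s(\R^N)$ (or by Plancherel's theorem applied to the equivalent norm mentioned just above the lemma), for every $u \in H^1(\R^N) \subset H^s(\R^N)$ and every $s \in (0,1]$ one has
\begin{equation*}
  \left\| (-\Delta)^{s/2} u \right\|_{L^2(\R^N)}^2 = \int_{\R^N} |\xi|^{2s} |\hat{u}(\xi)|^2 \, d\xi,
\end{equation*}
while of course $\|\nabla u\|_{L^2(\R^N)}^2 = \int_{\R^N} |\xi|^{2} |\hat{u}(\xi)|^2 \, d\xi$. Thus the whole statement reduces to passing to the limit inside the integral as $s \to 1^-$.

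The plan is to apply Lebesgue's dominated convergence theorem to the family $\{|\xi|^{2s} |\hat{u}(\xi)|^2\}_{s \in (1/2,1)}$. Pointwise convergence is trivial: for every fixed $\xi \in \R^N$, $|\xi|^{2s} \to |\xi|^2$ as $s \to 1^-$. For the dominating function, I would split the integration domain at $|\xi|=1$: if $|\xi| \leq 1$ then $|\xi|^{2s} \leq 1$ (since $s > 0$), and if $|\xi| \geq 1$ then $|\xi|^{2s} \leq |\xi|^2$ (since $s \leq 1$). Therefore
\begin{equation*}
  |\xi|^{2s} |\hat{u}(\xi)|^2 \leq \bigl(1 + |\xi|^2\bigr) |\hat{u}(\xi)|^2 \qquad \text{for every } s \in (1/2, 1).
\end{equation*}
The right-hand side is integrable over $\R^N$ precisely because $u \in H^1(\R^N)$, so dominated convergence applies and yields the claim.

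The argument is essentially routine, so there is no real obstacle beyond being careful that the Fourier representation of $\|(-\Delta)^{s/2}u\|_{L^2}^2$ is valid in the form above for $u \in H^1 \subset H^s$; this is exactly the content of the equivalent-norm statement quoted from \cite{MBRS} just before the lemma. A purely real-variable alternative, which one could mention as a remark, is to combine the identity
\begin{equation*}
  \int_{\R^N \times \R^N} \frac{|u(x)-u(y)|^2}{|x-y|^{N+2s}} \, dx \, dy = \frac{2}{C(N,s)} \left\| (-\Delta)^{s/2} u \right\|_{L^2(\R^N)}^2
\end{equation*}
with the Bourgain--Brezis--Mironescu limit and the asymptotic $C(N,s) \sim c_N (1-s)$ as $s \to 1^-$; but the Fourier proof above is shorter and self-contained.
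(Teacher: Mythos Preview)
Your proof is correct and is in fact the approach the paper itself uses a few lines later to show $(-\Delta)^s\varphi\to(-\Delta)\varphi$ in $L^2$; however, for this particular lemma the paper proceeds differently. The paper stays on the real-variable side: it invokes the identity $\|(-\Delta)^{s/2}u\|_{L^2}^2=\tfrac{C(N,s)}{2}\iint\frac{|u(x)-u(y)|^2}{|x-y|^{N+2s}}\,dx\,dy$, the Bourgain--Brezis--Mironescu type limit $(1-s)\iint\cdots\to\frac{\omega_{N-1}}{2N}\|\nabla u\|_{L^2}^2$ from \cite[Remark~4.3]{DNPV}, and the asymptotics $\lim_{s\to 1^-}\frac{C(N,s)}{2(1-s)}=\frac{2N}{\omega_{N-1}}$ from \cite[Corollary~4.2]{DNPV}, so that the constants cancel exactly. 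Your Fourier argument with the dominating function $(1+|\xi|^2)|\hat u|^2$ is shorter and self-contained, needing nothing beyond Plancherel and dominated convergence; the paper's route, on the other hand, keeps the computation in the Gagliardo-seminorm language and makes the cancellation of the $(1-s)$ factor explicit, which is closer in spirit to the BBM literature it cites. Amusingly, you mention the paper's route as an alternative remark, while the paper could equally have mentioned yours.
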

\begin{proof}
  From \cite[Proposition 3.6]{DNPV}, we know that
  \begin{align*}
    \left\| (-\Delta)^{s/2} u \right\|_{L^2(\mathbb{R}^N)}^2 =
    \frac{C(N,s)}{2} \int_{\mathbb{R}^N \times \mathbb{R}^N}
    \frac{|u(x)-u(y)|^2}{|x-y|^{N+2s}}\, dx\, dy.
  \end{align*}
  From \cite[Remark 4.3]{DNPV}, we know that
  \begin{align*}
    \lim_{s \to 1^-} (1-s) \int_{\mathbb{R}^N \times \mathbb{R}^N}
    \frac{|u(x)-u(y)|^2}{|x-y|^{N+2s}}\, dx\, dy =
    \frac{\omega_{N-1}}{2N} \left\| \nabla u \right\|_{L^2(\mathbb{R}^N)}^2.
  \end{align*}
  Therefore, recalling \cite[Corollary 4.2]{DNPV},
  \begin{align*}
    \lim_{s \to 1^-} \left\| (-\Delta)^{s/2} u
    \right\|_{L^2(\mathbb{R}^N)}^2 &= \lim_{s \to 1^-}
                                     \frac{C(N,s)}{2(1-s)}
                                     \left( (1-s) \int_{\mathbb{R}^N \times \mathbb{R}^N}
    \frac{|u(x)-u(y)|^2}{|x-y|^{N+2s}}\, dx\, dy \right) \\
    &= \frac{1}{2} \frac{4N}{\omega_{N-1}}
      \frac{\omega_{N-1}}{2N} \left\| \nabla u
      \right\|_{L^2(\mathbb{R}^N)}^2 = \left\| \nabla u \right\|_{L^2(\mathbb{R}^N)}^2.
    \end{align*}
  \end{proof}
  On $H^s (\R^N)$ we introduce a new norm
\begin{align}\label{norm-s}
\|u\|_s^2 := \left\| (-\Delta)^{s/2} u \right\|_{L^2(\mathbb{R}^N)}^2 + \int_{\R^N} V(x) u^2 \, dx, \quad u \in H^s (\R^N),
\end{align}
which is, under (V), equivalent to $\| \cdot \|_{H^s
  (\R^N)}$. Similarly we introduce the norm on $H^1 (\R^N)$ by putting
\begin{align}\label{norm-1}
\|u\|^2 := \int_{\R^N} |\nabla u|^2 + V(x) u^2 \, dx, \quad u \in H^1(\R^N).
\end{align}

\begin{Cor} \label{cor:2.3}
For every $u \in H^1(\R^N)$ we have
  \begin{align*}
    \lim_{s \to 1^-} \|u\|_s = \|u\|.
  \end{align*}
\end{Cor}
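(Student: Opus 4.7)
The plan is essentially to read off the corollary from the preceding Lemma. Observe that the potential term in the definition of $\|u\|_s$ is independent of $s$, so
\begin{align*}
\|u\|_s^2 - \|u\|^2 = \left\| (-\Delta)^{s/2} u \right\|_{L^2(\R^N)}^2 - \left\| \nabla u \right\|_{L^2(\R^N)}^2.
\end{align*}
Hence the question reduces entirely to the convergence of the nonlocal seminorm to the local one.

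Next, I would invoke the preceding lemma directly: for every $u \in H^1(\R^N)$,
\begin{align*}
\lim_{s \to 1^-} \left\| (-\Delta)^{s/2} u \right\|_{L^2(\R^N)}^2 = \left\| \nabla u \right\|_{L^2(\R^N)}^2.
\end{align*}
Combined with the identity above, this gives $\lim_{s \to 1^-} \|u\|_s^2 = \|u\|^2$. Since both norms are nonnegative and the square root is continuous on $[0, +\infty)$, taking square roots yields $\lim_{s \to 1^-} \|u\|_s = \|u\|$, which is the claim.

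There is essentially no obstacle: the only (very mild) point is verifying that $u \in H^1(\R^N)$ together with the boundedness of $V$ from (V) ensures that $\int_{\R^N} V(x) u^2\, dx$ is finite and independent of $s$, so that the cancellation of the potential term is legitimate. The assumption $\inf_{\R^N} V > 0$ further guarantees that $\|\cdot\|_s$ and $\|\cdot\|$ are indeed norms, so the square-root step is harmless.
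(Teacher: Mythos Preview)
Your proof is correct and is exactly the intended argument: the paper states this as an immediate corollary of the preceding Lemma without giving any proof, and your write-up makes explicit precisely the two-line deduction (the potential term is $s$-independent, and the seminorm converges by the Lemma). There is nothing to add.
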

The following convergence result will be used in the sequel.
\begin{Lem}
  For every $ \varphi \in C_0^\infty(\R^N)$, there results
  \begin{align*}
    \lim_{s \to 1^-} \left\| (-\Delta)^{s} \varphi - (-\Delta) \varphi
    \right\|_{L^2(\R^N)} =0.
  \end{align*}
\end{Lem}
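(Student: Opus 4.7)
The cleanest route is via the Fourier transform combined with dominated convergence. Recall that for any Schwartz function the fractional Laplacian satisfies $\widehat{(-\Delta)^t \varphi}(\xi) = |\xi|^{2t} \hat{\varphi}(\xi)$ for every $t \in (0,1]$, a standard identity found in \cite{DNPV}. Since $\varphi \in C_0^\infty(\R^N) \subset \mathcal{S}(\R^N)$, its Fourier transform $\hat{\varphi}$ belongs to the Schwartz class. By Plancherel's identity,
\[
\left\| (-\Delta)^{s} \varphi - (-\Delta) \varphi \right\|_{L^2(\R^N)}^2 = \int_{\R^N} \bigl( |\xi|^{2s} - |\xi|^2 \bigr)^2 |\hat{\varphi}(\xi)|^2 \, d\xi.
\]

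The plan is to apply the Lebesgue dominated convergence theorem to this integral. First, for each fixed $\xi \in \R^N$ the elementary relation $|\xi|^{2s} \to |\xi|^2$ as $s \to 1^-$ shows that the integrand converges pointwise to zero. Next, I must produce a dominating function valid for all $s$ in a left neighborhood of $1$, say $s \in [1/2, 1)$. I split the domain into low and high frequencies: on $\{|\xi| \leq 1\}$, both $|\xi|^{2s}$ and $|\xi|^2$ lie in $[0,1]$, so $(|\xi|^{2s} - |\xi|^2)^2 \leq 1$ and the integrand is bounded by $|\hat{\varphi}(\xi)|^2$, which belongs to $L^1$ because $\hat{\varphi} \in L^2(\R^N)$. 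On $\{|\xi| > 1\}$ we have $0 \leq |\xi|^{2s} \leq |\xi|^2$, hence $(|\xi|^{2s} - |\xi|^2)^2 \leq |\xi|^4$, and the integrand is bounded by $|\xi|^4 |\hat{\varphi}(\xi)|^2$, which is integrable because $\hat{\varphi}$ lies in the Schwartz class, so in particular $\xi \mapsto |\xi|^2 \hat{\varphi}(\xi)$ belongs to $L^2(\R^N)$.

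Combining these two estimates yields a single $L^1$ dominant, and dominated convergence gives the desired limit. The only delicate point is ensuring uniformity of the dominant in $s$; this is precisely why the split into $|\xi| \leq 1$ and $|\xi| > 1$ is needed, since in the low-frequency range one cannot use the bound $|\xi|^4 |\hat{\varphi}(\xi)|^2$ alone, while in the high-frequency range the bound by $|\hat{\varphi}(\xi)|^2$ would require controlling $|\xi|^{4s}$ uniformly, which is not available on the whole space. Apart from this minor bookkeeping, there is no genuine obstacle, and the proof is essentially a few lines once the Fourier representation is invoked.
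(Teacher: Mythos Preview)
Your proof is correct and follows essentially the same route as the paper: pass to the Fourier side via Plancherel, reduce to showing that $\int_{\R^N} (|\xi|^{2s}-|\xi|^2)^2 |\hat\varphi(\xi)|^2\,d\xi \to 0$, and use that $\hat\varphi$ is a Schwartz function. The paper is in fact terser than you are---it simply remarks that the conclusion is easy because $\hat\varphi$ is rapidly decreasing---whereas you spell out the dominated convergence argument with the low/high frequency split; this extra detail is fine and arguably clearer.
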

\begin{proof}
  We notice that
  \begin{multline*}
    \left\| (-\Delta)^{s} \varphi - (-\Delta) \varphi
    \right\|_{L^2(\mathbb{R}^N)} = \left\| \mathcal{F}^{-1}_\xi \left( \left(
    |\xi|^{2s} - |\xi|^2 \right) \hat{\varphi}(\xi) \right)
                             \right\|_{L^2(\mathbb{R}^N)}
    \\ \leq C \left\| \left(
    |\cdot|^{2s} - |\cdot|^2 \right) \hat{\varphi}
                             \right\|_{L^2(\mathbb{R}^N)}
  \end{multline*}
  where $C>0$ is a constant, independent of $s$, that depends on the
  definition of the Fourier transform $\cF$. It is now easy to conclude,
  since the Fourier transform of a test function is a rapidly
  decreasing function.
\end{proof}

We will need some precise information on the embedding constant for
fractional Sobolev spaces.
\begin{Th}[\cite{KT}]\label{ThKT}
  Let $N>2s$ and $2_s^*=2N/(N-2s)$. Then
  \begin{align*} 
    \|u\|_{L^{2_s^*} (\R^N)}^2 \leq \frac{\Gamma \left( \frac{N-2s}{2}
    \right)}{\Gamma \left( \frac{N+2s}{2} \right)} \left| \mathbb{S}
    \right|^{-\frac{2s}{N}} \| (-\Delta)^{s/2} u \|^2_{L^2 (\R^N)}
  \end{align*}
  for every $u \in H^s(\mathbb{R}^N)$, where $\mathbb{S}$ denotes the
  $N$-dimensional unit sphere and $|\mathbb{S}|$ its surface area.
\end{Th}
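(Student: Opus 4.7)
The approach I would take is the classical duality derivation of the sharp fractional Sobolev inequality from Lieb's sharp Hardy--Littlewood--Sobolev (HLS) inequality, routed through the Riesz-potential representation of $(-\Delta)^{-s/2}$. The argument is entirely Fourier-analytic, linear in $u$, and requires nothing beyond $u \in H^s(\R^N)$ together with the condition $N > 2s$.

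First I would record the Riesz representation: since $(-\Delta)^{s/2}$ is the Fourier multiplier by $|\xi|^s$, for $0 < s < N/2$ there is an explicit $\Gamma$-ratio $\gamma(N,s)$ such that
\[
(-\Delta)^{-s/2} f(x) = \gamma(N,s) \int_{\R^N} \frac{f(y)}{|x-y|^{N-s}}\, dy,
\]
with $\gamma(N,s)$ coming from the identity $\widehat{|\cdot|^{-(N-s)}}(\xi) = c_{N,s}\, |\xi|^{-s}$. Then, by $L^p$-duality with conjugate exponent $q = (2^*_s)' = 2N/(N+2s)$,
\[
\|u\|_{L^{2^*_s}(\R^N)} = \sup_{\|f\|_{L^q} \leq 1} \int_{\R^N} u f\, dx.
\]
Rewriting $\int u f$ via Plancherel as $\int |\xi|^s \widehat{u}(\xi) \cdot |\xi|^{-s} \overline{\widehat{f}(\xi)}\, d\xi$ and applying Cauchy--Schwarz in $\xi$ gives
\[
\left| \int_{\R^N} u f\, dx \right| \leq \left\| (-\Delta)^{s/2} u \right\|_{L^2} \cdot \left\| (-\Delta)^{-s/2} f \right\|_{L^2},
\]
and a second application of Plancherel together with the Riesz representation identifies the last factor as $\gamma(N,s) \left( \iint f(x) f(y) |x-y|^{-(N-2s)}\, dx\, dy \right)^{1/2}$.

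At this point I would invoke the sharp diagonal HLS inequality of Lieb with exponents $p = q = 2N/(N+2s)$,
\[
\int_{\R^N \times \R^N} \frac{f(x) f(y)}{|x-y|^{N-2s}}\, dx\, dy \leq C_{\mathrm{HLS}}(N,s)\, \|f\|_{L^q}^2,
\]
with its explicit optimal constant, and take the supremum over $\|f\|_{L^q} \leq 1$ to obtain
\[
\|u\|_{L^{2^*_s}}^2 \leq \gamma(N,s)^2\, C_{\mathrm{HLS}}(N,s)\, \|(-\Delta)^{s/2} u\|_{L^2}^2.
\]
The main --- and, I expect, the only genuine --- obstacle is then to verify that the combined constant $\gamma(N,s)^2\, C_{\mathrm{HLS}}(N,s)$ collapses to the symmetric ratio $\Gamma((N-2s)/2)/\Gamma((N+2s)/2) \cdot |\mathbb{S}|^{-2s/N}$ claimed in the statement. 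This is a purely algebraic manipulation of $\Gamma$-functions in which the identity $|\mathbb{S}| = 2\pi^{N/2}/\Gamma(N/2)$ and Legendre's duplication formula absorb the $\pi$-factors contributed by both the Riesz kernel and Lieb's constant. Equality is attained on the fractional Aubin--Talenti bubbles $u_{\lambda,z}(x) = (\lambda^2 + |x-z|^2)^{-(N-2s)/2}$: they are the Lieb extremizers of HLS, and for $f$ proportional to $u^{2^*_s - 1}$ one checks that $(-\Delta)^{s/2} u$ becomes proportional to $(-\Delta)^{-s/2} f$, saturating the Cauchy--Schwarz step as well.
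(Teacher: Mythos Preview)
Your argument is the standard duality route to the sharp fractional Sobolev inequality via Lieb's sharp HLS inequality and the Riesz-potential inversion formula, and it is correct in outline; the only place requiring care is, as you say, the bookkeeping of the $\Gamma$-factors and the powers of $\pi$ coming from both the Riesz kernel normalization and Lieb's constant, but this is a finite computation.

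However, there is nothing to compare against: the paper does \emph{not} prove this theorem. It is stated with a citation to Cotsiolis--Tavoularis~[KT] and used as a black box to derive the subsequent Lemma on $s$-uniform embedding constants. So your proposal is not an alternative to the paper's proof but rather a proof of a result the authors simply quote from the literature.
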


The following inequality in an easy consequence of Theorem \ref{ThKT}, see also \cite[Lemma 2.7]{BS}.

\begin{Lem}\label{lemma-constant-independednt}
  Let $N \geq 3$ and $q \in [2,2N/(N-1)]$. Then there exists a
  constant $C=C(N,q)>0$ such that, for every $s \in [1/2,1]$ and every
  $u \in H^s(\R^N)$, we have
	\begin{align*}
	\|u\|_{L^{q} (\R^N)} \leq C(N,q) \| u \|_{s}.
	\end{align*}
\end{Lem}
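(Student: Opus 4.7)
The plan is to combine the Sobolev inequality of Theorem \ref{ThKT} with a Hölder interpolation argument, then exploit the compactness of $[1/2, 1]$ in the parameter $s$ to make all constants uniform.

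First I would verify the inclusion of exponents. Since $2_s^* = 2N/(N-2s) \geq 2N/(N-1)$ holds exactly when $s \geq 1/2$, the hypothesis $q \in [2, 2N/(N-1)]$ places $q$ inside the Sobolev range $[2, 2_s^*]$ for every admissible $s$. Next I would dispatch $q = 2$ separately using (V): since $\inf_{\R^N} V > 0$, one has
\[
\|u\|_{L^2(\R^N)}^2 \leq (\inf V)^{-1} \int_{\R^N} V(x) u^2\, dx \leq (\inf V)^{-1} \|u\|_s^2,
\]
with a constant clearly independent of $s$.

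For $q \in (2, 2N/(N-1)]$ and fixed $s \in [1/2, 1]$, I would apply the standard Hölder interpolation
\[
\|u\|_{L^q(\R^N)} \leq \|u\|_{L^2(\R^N)}^{1 - \theta_s} \, \|u\|_{L^{2_s^*}(\R^N)}^{\theta_s},
\]
where $\theta_s \in [0, 1]$ is determined by $\frac{1}{q} = \frac{1-\theta_s}{2} + \frac{\theta_s}{2_s^*}$, which gives the explicit formula $\theta_s = \frac{N}{s}\bigl(\frac{1}{2} - \frac{1}{q}\bigr)$. Theorem \ref{ThKT} then yields $\|u\|_{L^{2_s^*}(\R^N)} \leq K(N,s)\|(-\Delta)^{s/2} u\|_{L^2(\R^N)} \leq K(N,s) \|u\|_s$, where $K(N,s)^2 = \Gamma((N-2s)/2) \, \Gamma((N+2s)/2)^{-1} \, |\mathbb{S}|^{-2s/N}$. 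Combining the two bounds gives $\|u\|_{L^q(\R^N)} \leq (\inf V)^{-(1-\theta_s)/2} K(N,s)^{\theta_s} \|u\|_s$.

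The main obstacle, and the whole point of the lemma, is to show that this composite constant is bounded uniformly in $s$. Since $N \geq 3$ and $s \in [1/2, 1]$, we have $(N-2s)/2 \geq 1/2 > 0$ and $(N+2s)/2 \geq 2$, so we stay bounded away from the poles of $\Gamma$; hence $s \mapsto K(N, s)$ is continuous on the compact interval $[1/2, 1]$ and therefore bounded above. Similarly $s \mapsto \theta_s$ is continuous and takes values in the compact set $[0, 1]$. The resulting supremum $\sup_{s \in [1/2, 1]} (\inf V)^{-(1-\theta_s)/2} K(N,s)^{\theta_s}$ is finite and provides the desired $s$-independent constant $C(N, q)$.
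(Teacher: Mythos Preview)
Your proof is correct and follows exactly the route the paper indicates: the paper gives no detailed argument, merely stating that the inequality is ``an easy consequence of Theorem~\ref{ThKT}'' and referring to \cite[Lemma~2.7]{BS}, and your interpolation between $L^2$ and $L^{2_s^*}$ together with the continuity of the Cotsiolis--Tavoularis constant on the compact interval $[1/2,1]$ is precisely how one makes that consequence explicit. One cosmetic remark: your constant visibly depends on $\inf V$ as well as on $N$ and $q$, but since $V$ is fixed throughout and the only issue is independence from $s$, this is harmless.
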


\begin{Def}
  A weak solution to problem \eqref{eq:1.1} is a function
  $u \in H^s(\R^N)$ such that
	\begin{align*}
	\langle (-\Delta)^{s/2} u \mid (-\Delta)^{s/2} \varphi \rangle_{L^2(\mathbb{R}^N)} + \int_{\R^N} V(x) u \varphi \, dx = \int_{\R^N} f(x,u) \varphi\, dx
	\end{align*}
	for every $\varphi \in H^s(\R^N)$.
\end{Def}
Weak solutions are therefore critical points of the associated energy
functional $\cJ_s \colon H^s (\R^N) \rightarrow \R$ defined by
\begin{align*}
  \cJ_s (u) = \frac12 \left\| (-\Delta)^{s/2} u \right\|_{L^2(\mathbb{R}^N)}^2 + \frac12
  \int_{\R^N} V(x) u^2 \, dx - \int_{\R^N} F(x,u) \, dx.
\end{align*}

We recall also the definition of a weak solution in the local case.

\begin{Def} 
  A weak solution to problem \eqref{eq:1.2} is a function
  $u \in H^1 (\R^N)$ such that
\begin{align*}
\int_{\R^N} \nabla u \cdot \nabla \varphi \, dx + \int_{\R^N} V(x) u \varphi \, dx = \int_{\R^N} f(x,u) \varphi \, dx
\end{align*}
for every $\varphi \in H^1(\R^N)$.
\end{Def}

For the local problem \eqref{eq:1.2} we put
$\cJ\colon H^1 (\R^N) \rightarrow \R$
\begin{align} \label{eq:2.2}
  \cJ(u) = \frac12 \int_{\R^N} |\nabla u|^2 + V(x) u^2 \, dx - \int_{\R^N} F(x,u) \, dx.
\end{align}

Recalling the notation \eqref{norm-s} and \eqref{norm-1}, we can
rewrite our functionals in the form
\begin{align*}
\cJ_s (u) &= \frac12 \|u\|_s^2 - \int_{\R^N} F(x,u) \, dx, \quad u \in H^s (\R^N), \\
\cJ (u) &= \frac12 \|u\|^2 - \int_{\R^N} F(x,u) \, dx, \quad u \in H^1 (\R^N).
\end{align*}

\section{Uniform Lions' concentration-compactness principle}

Since the summability exponent of our space is not fixed, we need a
``uniform'' version of a celebrated result by P.-L. Lions.
\begin{Th}\label{unif-lions}
  Let $r > 0$, $2 \leq q < \frac{2N}{N-1}$ and $N \geq 3$. Suppose
  moreover that $\{ s_n \}_n \subset (1/2, 1)$,
  $u_n \in H^{s_n} (\R^N)$ and
\begin{align*}
\| u_n \|_{s_n} \leq M,
\end{align*}
where $M > 0$ does not depend on $s_n$. If
\begin{align*}
\lim_{n \to +\infty} \sup_{y \in \R^N} \int_{B(y,r)} |u_n|^q \, dx = 0
\end{align*}
then $u_n \to 0$
in $L^p (\R^N)$ for all $p \in \left( 2, \frac{2N}{N-1} \right)$.
\end{Th}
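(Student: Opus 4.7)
The plan is to adapt the classical Lions concentration-compactness lemma (in the spirit of Willem, \emph{Minimax Theorems}, Lemma 1.21) to the fractional setting, with the additional requirement that every constant be uniform in $s_n \in (1/2, 1)$. First I reduce to the case $q = 2$: since $q \geq 2$ and balls have finite Lebesgue measure, H\"older's inequality gives
\[
\int_{B(y,r)} |u_n|^2 \, dx \leq |B(y,r)|^{1-2/q} \left( \int_{B(y,r)} |u_n|^q \, dx \right)^{2/q},
\]
so $\varepsilon_n := \sup_{y \in \R^N} \int_{B(y,r)} |u_n|^2 \, dx \to 0$, and it suffices to work under this $L^2$-vanishing.

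Next, I cover $\R^N$ by balls $\{B_i\}_i$ of radius $r$ with overlap bounded by a constant $K = K(N)$, and set $q_1 := 2N/(N-1)$. For $p \in (2, q_1)$, Lebesgue interpolation on each $B_i$ yields
\[
\|u_n\|_{L^p(B_i)}^p \leq \|u_n\|_{L^2(B_i)}^{p\lambda} \|u_n\|_{L^{q_1}(B_i)}^{p(1-\lambda)},
\]
with $\lambda \in (0,1)$ determined by $1/p = \lambda/2 + (1-\lambda)/q_1$. Bounding the first factor by $\varepsilon_n^{p\lambda/2}$ reduces the problem to a uniform estimate on $\sum_i \|u_n\|_{L^{q_1}(B_i)}^{p(1-\lambda)}$. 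Let $p_0 := 2(N+1)/N \in (2, q_1)$ be the value for which $p_0(1-\lambda_0) = 2$; a direct computation shows $p(1-\lambda) = q_1(p-2)/(q_1-2)$, so $p(1-\lambda) \geq 2$ precisely when $p \geq p_0$. For such $p$ I factor
\[
\|u_n\|_{L^{q_1}(B_i)}^{p(1-\lambda)} \leq \|u_n\|_{L^{q_1}(\R^N)}^{p(1-\lambda)-2} \|u_n\|_{L^{q_1}(B_i)}^2,
\]
the first factor being uniformly bounded thanks to Lemma~\ref{lemma-constant-independednt}. To handle the summation of the local $L^{q_1}$-norms, I pick smooth cutoffs $\varphi_i$ with $\varphi_i \equiv 1$ on $B_i$, $\supp \varphi_i \subset 2 B_i$, and uniformly bounded $C^1$-norms, apply Lemma~\ref{lemma-constant-independednt} to $u_n \varphi_i$, and control $\|u_n \varphi_i\|_{s_n}$ by expanding $(-\Delta)^{s_n/2}(u_n \varphi_i)$ via its Gagliardo representation. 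The finite-overlap property gives $\sum_i \|u_n \varphi_i\|_{s_n}^2 \leq C \|u_n\|_{s_n}^2 \leq C M^2$, hence $\|u_n\|_{L^p(\R^N)}^p \leq C \varepsilon_n^{p\lambda/2} \to 0$ for $p \in [p_0, q_1)$.

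For the remaining range $p \in (2, p_0)$, I interpolate globally in Lebesgue spaces:
\[
\|u_n\|_{L^p(\R^N)} \leq \|u_n\|_{L^2(\R^N)}^\mu \, \|u_n\|_{L^{p_0}(\R^N)}^{1-\mu}.
\]
The $L^2$-norm is uniformly bounded because $\inf V > 0$ gives $\|u_n\|_{L^2}^2 \leq \|u_n\|_{s_n}^2 / \inf V \leq M^2/\inf V$, and $\|u_n\|_{L^{p_0}} \to 0$ by the previous step.

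The main obstacle is the uniformity in $s_n$ of the commutator estimate used to control $\|u_n \varphi_i\|_{s_n}$. A careless expansion of $(-\Delta)^{s_n/2}(u_n \varphi_i)$ produces terms whose norms blow up at the endpoints $s = 1/2$ and $s = 1$, where the Gagliardo kernel $|x-y|^{-N-2s}$ develops boundary singularities. This is precisely compensated by the normalization constant $C(N, s_n) \sim s_n(1-s_n)$ appearing in the definition of $(-\Delta)^{s_n/2}$; combined with the absolute summability of the tails $\sum_i |x - y_i|^{-N-2s_n}$ (uniform in $s_n$ bounded away from $0$) this yields the required uniform constants and makes the whole scheme robust as $s_n \to 1^-$.
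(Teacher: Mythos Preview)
Your argument is correct, and the commutator bound you isolate as the crux does go through uniformly in $s_n$: the $1/(1-s_n)$ divergence coming from the short-range part of $\int |x-y|^{-N-2s_n}|\varphi_i(x)-\varphi_i(y)|^2\,dx$ is exactly cancelled by the factor $C(N,s_n)\sim s_n(1-s_n)$, while the long-range tails sum thanks to bounded overlap. So the scheme works.

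That said, your route is considerably more elaborate than the paper's. The paper does not reduce to $q=2$, does not introduce cutoffs, and does not split the range of~$p$. It fixes a \emph{single} exponent $t\in(q,\tfrac{2N}{N-1})$ determined by the condition $\lambda t=2$ (explicitly $t=2+q/N$), interpolates on each ball between $L^q$ and $L^{2N/(N-1)}$, and bounds the critical factor by the $H^{s_n}$-norm. Because $\lambda t=2$, the fractional norm appears \emph{squared}, and summing over a cover with overlap $N+1$ gives
\[
\|u_n\|_{L^t(\R^N)}^t \;\lesssim\; \sup_{y}\|u_n\|_{L^q(B(y,r))}^{(1-\lambda)t}\,\|u_n\|_{s_n}^2 \;\to\; 0 .
\]
A single global interpolation against the uniform $L^2$ and $L^{2N/(N-1)}$ bounds then covers every $p\in(2,\tfrac{2N}{N-1})$. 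The trade-off is this: the paper's summation step tacitly requires that the $\|\cdot\|_{s_n}$ appearing on each ball be a \emph{local} $H^{s_n}(B(y,r))$-norm (so that the squares add up with constant $N+1$), hence a local fractional Sobolev inequality with constant uniform in $s\in(1/2,1)$---something not spelled out in the text. Your cutoff approach avoids this by invoking only the global embedding of Lemma~\ref{lemma-constant-independednt}, at the cost of the commutator bookkeeping. Either way the uniformity as $s_n\to 1^-$ rests on the same Bourgain--Brezis--Mironescu cancellation.
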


\begin{proof}
Let $t \in \left( q, \frac{2N}{N-1} \right)$. Then
\begin{align*}
\| u_{n} \|_{L^t (B(y,r))} &\leq \| u_{n} \|_{L^q (B(y,r))}^{1 - \lambda} \| u_{n} \|_{L^{\frac{2N}{N-1}} (B(y,r))}^\lambda \\
&\leq C \| u_{n} \|_{L^q (B(y,r))}^{1 - \lambda} \| u_{n} \|_{s_n}^\lambda,
\end{align*}
where $C > 0$ is independent of $s_n$ and
$\lambda = \frac{t-q}{\frac{2N}{N-1}-q} \frac{2N}{(N-1) t}$. Choose
$t$ such that $\lambda = \frac{2}{t}$. Then
\begin{align*}
\int_{ B(y,r)} |u_{n}|^t \, dx \leq C^t \| u_{n} \|_{L^q (B(y,r))}^{(1 - \lambda)t} \| u_{n} \|_{s_n}^{2}.
\end{align*}
Covering space $\R^N$ by balls of radius $r$, in a way that each point
is contained in at most $N+1$ balls, we get
\begin{align*}
\int_{\R^N} |u_{n}|^t \, dx &\leq (N+1) C^t \sup_{y \in \R^N} \left( \int_{B(y,r)} |u_{n}|^q \, dx \right)^{\frac{(1 - \lambda)t}{q}} \| u_{n} \|_{s_n}^{2} \\
&\leq (N+1) M^2 C^t \sup_{y \in \R^N} \left( \int_{B(y,r)} |u_{n}|^q \, dx \right)^{\frac{(1 - \lambda)t}{q}} \to 0.
\end{align*}
Hence $u_{n} \to 0$ in $L^t (\R^N)$. Note that
\begin{align*}
\| u_n\|_{L^2 (\R^N)}^2 \leq D \|u_n\|_{s_n}^2 \leq D M^2,
\end{align*}
where $D$ does not depend on $s_n$ and $n$. Similarly, from Lemma \ref{lemma-constant-independednt}, there follows that $\{ u_n\}_n$ is bounded in $L^{\frac{2N}{N-1}} (\R^N)$. From the interpolation inequality, since $\{ u_n \}_n$ is bounded in $L^2 (\R^N)$ and in $L^{\frac{2N}{N-1}} (\R^N)$, we obtain $u_n \to 0$ in $L^p (\R^N)$ for all $p \in \left( 2, \frac{2N}{N-1} \right)$.
\end{proof}


Finally, we extend the locally compact embedding into Lebesgue spaces
in a uniform way.

\begin{Th}\label{local-compactness}
  Let $\{s_n\}_n$ be a sequence such that $1/2 < s_n < 1$ and
  $s_n \rightarrow1$, and let
  $\{v_{s_n}\}_n \subset H^{s_n}(\mathbb{R}^N)$ be such that
\begin{align*} 
M = \sup_n \|v_{s_n}\|_{s_n} < \infty.
\end{align*}
Then the sequence $\{v_{s_n}\}_n$ converges, up to a subsequence, to
some $v\in H^1 (\mathbb{R}^N)$ in
$L_{\mathrm{loc}}^q(\mathbb{R}^N)$ for every $q \in [2,2N/(N-1))$, and
pointwise almost everywhere.
\end{Th}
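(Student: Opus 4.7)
The plan is to first obtain a uniform bound in a \emph{fixed} Hilbert space, extract a limit by standard Rellich compactness, and then upgrade the regularity of the limit by a Fourier duality argument.

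First I would show that $\{v_{s_n}\}$ is bounded in the fixed space $H^{1/2}(\R^N)$. Since $V \geq \inf V > 0$ we already have $\|v_{s_n}\|_{L^2(\R^N)}^2 \leq M^2 / \inf V$. By Plancherel,
\[
\|(-\Delta)^{s_n/2} v_{s_n}\|_{L^2(\R^N)}^2 = \int_{\R^N} |\xi|^{2s_n} |\hat v_{s_n}(\xi)|^2\, d\xi \leq M^2.
\]
Because $s_n \geq 1/2$ we have $|\xi| \leq |\xi|^{2s_n}$ on $\{|\xi|\geq 1\}$ and $|\xi|\leq 1$ on $\{|\xi|\leq 1\}$, so splitting the integral yields a uniform bound on $\int |\xi| |\hat v_{s_n}|^2\, d\xi$, i.e.\ on the $H^{1/2}$ seminorm. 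Hence $\sup_n \|v_{s_n}\|_{H^{1/2}(\R^N)} < \infty$.

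Next I would invoke reflexivity and Rellich--Kondrachov. Up to a subsequence $v_{s_n} \weakto v$ in $H^{1/2}(\R^N)$, and since the embedding $H^{1/2}(\Omega) \hookrightarrow L^q(\Omega)$ is compact for every bounded $\Omega$ and every $q \in [1, 2N/(N-1))$, a diagonal extraction along an exhaustion by balls $B(0,k)$ gives a subsequence such that $v_{s_n} \to v$ in $L^q_{\mathrm{loc}}(\R^N)$ for all $q \in [2, 2N/(N-1))$. A further subsequence converges pointwise almost everywhere.

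The main obstacle is to show that $v$ actually lies in $H^1(\R^N)$, as only the varying $H^{s_n}$-norms are controlled. Since $\{(-\Delta)^{s_n/2} v_{s_n}\}$ is bounded in $L^2(\R^N)$, I would extract (along a further subsequence) a weak limit $g \in L^2(\R^N)$. For $\varphi \in \cC_0^\infty(\R^N)$ (or any Schwartz function), self-adjointness gives
\[
\int_{\R^N} (-\Delta)^{s_n/2} v_{s_n}\, \varphi\, dx = \int_{\R^N} v_{s_n}\, (-\Delta)^{s_n/2}\varphi\, dx.
\]
The left-hand side converges to $\int g\varphi\, dx$ by weak convergence. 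For the right-hand side I would show that $(-\Delta)^{s_n/2}\varphi \to (-\Delta)^{1/2}\varphi$ in $L^2(\R^N)$ via dominated convergence applied to $(|\xi|^{s_n}-|\xi|)^2 |\hat\varphi(\xi)|^2$, using that $\hat\varphi$ is rapidly decreasing and that $|\xi|^{s_n}\leq 1+|\xi|$ provides an integrable majorant. Combined with $v_{s_n}\weakto v$ in $L^2(\R^N)$, the right-hand side tends to $\int v\, (-\Delta)^{1/2}\varphi\, dx$, so $g = (-\Delta)^{1/2} v$ in the distributional sense. Since $g \in L^2(\R^N)$ and $v \in L^2(\R^N)$, this gives $v \in H^1(\R^N)$, completing the proof.
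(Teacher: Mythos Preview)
Your argument is correct; the first two steps (uniform $H^{1/2}$ bound, Rellich--Kondrachov plus diagonal extraction) coincide with the paper's, but the proof that $v\in H^1(\R^N)$ proceeds along a genuinely different line.

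The paper works directly with the Fourier transforms $\widehat{v_{s_n}}$: it introduces the convex, closed (hence weakly closed) sublevel sets
\[
B_t=\Bigl\{w\in L^2(\R^N)\ :\ \int_{\R^N}(1+|\xi|^2)^t|w(\xi)|^2\,d\xi\le K\Bigr\},
\]
observes that $\widehat{v_{s_n}}\in B_t$ once $s_n>t$, deduces $\hat v\in B_t$ for every $t<1$ by weak closedness, and then shows $\bigcap_{1/2<t<1}B_t=B_1$ via Fatou's lemma. Your route instead extracts a weak $L^2$ limit of $(-\Delta)^{s_n/2}v_{s_n}$ and identifies it with $(-\Delta)^{1/2}v$ through the duality relation and the strong $L^2$ convergence $(-\Delta)^{s_n/2}\varphi\to(-\Delta)^{1/2}\varphi$ for test functions. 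Since $s_n/2\to 1/2$, having $(-\Delta)^{1/2}v\in L^2$ (i.e.\ $|\xi|\hat v\in L^2$) together with $v\in L^2$ indeed yields $v\in H^1$. Your argument is slightly more elementary in that it avoids the intersection-of-sublevel-sets bookkeeping and the appeal to the Bourgain--Brezis--Mironescu result the paper cites for $H^1_{\mathrm{loc}}$ regularity; the paper's approach, on the other hand, gives the explicit $H^1$ bound $\int(1+|\xi|^2)|\hat v|^2\le K$ with the same constant $K$ controlling all the $H^{s_n}$ norms, which is a marginally sharper conclusion.
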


\begin{proof}
  Note that $H^{s_n} (\R^N) \subset H^{1/2} (\R^N)$ and
\begin{align*}
\| \cdot \|_{1/2} \leq C \| \cdot \|_{s_n}
\end{align*}
where $C>0$ does not depend on $s_n$ (and therefore also on $n$): see
for instance \cite[Proposition 1.1]{MBRS}.  In particular, for every
$n\in \mathbb{N}$ we have
\begin{align}\label{vs-bounded}
\|v_{s_n} \|_{1 / 2} \leq C \|v_{s_n} \|_{s_n} \leq CM.
\end{align}
Thus $\{ v_{s_n} \}_n$ is bounded in $H^{1/2} (\R^N)$. Hence, passing
to a subsequence, there exists a function $v$ such that
$v_{s_n} \weakto v$ in $H^{1/2} (\R^N)$, $v_{s_n} \to v$ pointwise
almost everywhere, and $v_{s_n} \to v$ in $L^q_{\mathrm{loc}} (\R^N)$
for every $q \in [2, 2N/(N-1) )$.
From \cite[Corollary 7]{BBM} it follows that
$v \in H^1_{\mathrm{loc}} (\R^N)$.
To complete the proof, we need to show that $v \in H^1 (\R^N)$.

Let $\widehat{v_{s_n}}$ denote the Fourier transform of $v_{s_n}$, similarly for $\widehat{v}$. We may assume, without loss of generality, that $\widehat{v_{s_n}} \weakto \widehat{v}$ in $L^2 (\R^N)$. Note that \eqref{vs-bounded} implies that
\begin{gather*}
\sup_{n} \int_{\R^N} (1+|\xi|^2)^{s_n} \left| \widehat{v_{s_n}} \right|^2 \, d\xi \leq K
\end{gather*}
for some constant $K > 0$. For $1/2<t  \leq 1$ we define
\begin{gather*}
B_t := \left\{ w \in L^2 (\R^N) \mid \ \int_{\R^N} (1+|\xi|^2)^t |w(\xi)|^2 \, d\xi \leq K \right\}.
\end{gather*}
First of all, we observe that
\begin{gather} \label{eq:3.2}
\bigcap_{1/2 < t < 1}  B_t = B_1.
\end{gather}
Indeed, for any $1/2 <t < 1$ we have $(1 + |\xi|^2)^{t} \leq 1 + |\xi|^2$. Take $w \in B_1$ and note that
\begin{gather*}
\int_{\R^N} (1 + |\xi|^2)^{t} \, |w(\xi)|^2 d\xi \leq \int_{\R^N} (1 + |\xi|^2) \, |w(\xi)|^2 d\xi \leq K.
\end{gather*}
Hence $w \in B_t$ for any $t < 1$. Thus
\begin{gather*}
\bigcap_{1/2 < t<1} B_t
\supset
B_1.
\end{gather*}
On the other hand, fix $w \in \bigcap_{1/2 < t<1} B_t$. Take any sequence $t_n \to 1^-$ with $t_n > 1/2$. Then obviously
\begin{gather*}
\liminf_{n\to+\infty} (1 + |\xi|^2)^{t_n} |w(\xi)|^2 = (1 + |\xi|^2) |w(\xi)|^2
\end{gather*}
and  Fatou's lemma yields
\begin{gather*}
\int_{\R^N} (1 + |\xi|^2) \, |w(\xi)|^2 d\xi \leq \liminf_{n\to+\infty} \int_{\R^N} (1 + |\xi|^2)^{t_n} \, |w(\xi)|^2 d\xi \leq K.
\end{gather*}
Hence $w \in B_1$, or 
\begin{gather*}
\bigcap_{1/2 < t<1} B_t
\subset
B_1,
\end{gather*}
and \eqref{eq:3.2} is proved.
Fix now any $t \in \left( 1/2, 1 \right)$ and choose $n_0$ such that $s_n > t$ for all $n \geq n_0$. Then
\begin{gather*}
(1+|\xi|^2)^{t} \leq (1+|\xi|^2)^{s_n} \quad \hbox{for every $\xi \in \R^N$,}
\end{gather*}
and
\begin{gather*}
\int_{\R^N} (1+|\xi|^2)^{t} \left| \widehat{v_{s_n}} \right|^2 \, d\xi \leq \int_{\R^N} (1+|\xi|^2)^{s_n} \left| \widehat{v_{s_n}} \right|^2 \, d\xi \leq K \quad \mbox{for } n \geq n_0.
\end{gather*}
Hence $\widehat{v_{s_n}} \in B_t$ for $n \geq n_0$. Each~$B_t$ is a closed and convex subset in $L^2(\R^N)$, and from \cite[Theorem 3.7]{Brezis} it is also weakly closed. Hence $\widehat{v} \in B_t$. Therefore, recalling \eqref{eq:3.2},
\begin{gather*}
\widehat{v} \in \bigcap_{\frac12 < t < 1} B_t =  \left\{ w \in L^2 (\R^N) \mid \ \int_{\R^N} (1+|\xi|^2) |w(\xi)|^2 \, d\xi \leq K \right\}.
\end{gather*}
This implies that 
\begin{gather*}
\int_{\R^N} (1+|\xi|^2) |\hat{v}(\xi)|^2 \, d\xi \leq K < +\infty
\end{gather*}
and $v \in H^1 (\R^N)$.

\end{proof}

\section{Existence of ground states}

It is easy to check that the energy functional $\cJ$ has the
mountain-pass geometry. In particular, there is radius $r > 0$ such
that
\begin{align*}
\inf_{\|u\| = r} \cJ(u) > 0.
\end{align*}
The following existence result is well-known in the literature, and
has been shown in various ways, see e.g. \cite{BM, SzW, SzW-Handbook,
  dPKSz}.

\begin{Th}
  Suppose that assumptions~(N), (V), (F1)--(F4) hold. Then there
  exists a ground state solution $u_0 \in H^1 (\R^N)$ to
  \eqref{eq:1.2}, i.e. a critical point of the functional $\cJ$ given
  by \eqref{eq:2.2} such that
\begin{align*}
\cJ(u_0) = \inf_{\cN} \cJ = \inf_{u \in H^1 (\R^N) \setminus \{0\}} \sup_{t \geq 0} \cJ(tu) = \inf_{\gamma \in \Gamma} \sup_{t \in [0,1]} \cJ(\gamma(t)),
\end{align*}
where $\cN$ is the so-called Nehari manifold
\begin{align*}
\cN := \{ u \in H^1 (\R^N) \setminus \{0\} \mid \cJ'(u)(u) = 0\}
\end{align*}
and
\begin{align*}
\Ga := \{ \gamma \in \cC([0,1], H^1 (\R^N)) \mid \gamma(0)=0, \ \| \gamma(1) \| > r, \ \cJ(\gamma(1)) < 0 \}.
\end{align*}
\end{Th}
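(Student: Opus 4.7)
The plan is to use the mountain pass theorem together with the Nehari manifold and a translation argument exploiting the $\Z^N$-periodicity. First I would verify the standard three facts: $(i)$ $\cJ$ has the mountain pass geometry (already noted), $(ii)$ for every $u \neq 0$ the map $t \mapsto \cJ(tu)$ on $(0,\infty)$ has a unique critical point $t(u)$ which is a strict maximum — this follows from (F4) together with Remark~\ref{rem:1.1}, since $t \mapsto f(x,tu)u$ is strictly increasing on $(0,\infty)$ — and $(iii)$ as a consequence, $\cN$ is a $C^1$ manifold and the three minimax values displayed in the statement coincide with a common value $c>0$.

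Next I would pick a Palais–Smale (or Cerami) sequence $(u_n) \subset H^1(\R^N)$ at level $c$, i.e.\ $\cJ(u_n) \to c$ and $\cJ'(u_n) \to 0$ in $H^{-1}$. Boundedness in $H^1(\R^N)$ is obtained from the standard computation
\begin{equation*}
c + o(1) + o(1)\|u_n\| = \cJ(u_n) - \tfrac{1}{2}\cJ'(u_n)(u_n) = \int_{\R^N} \left( \tfrac{1}{2} f(x,u_n) u_n - F(x,u_n) \right) dx \geq 0,
\end{equation*}
where Remark~\ref{rem:1.1} ensures positivity of the integrand. If this were not enough to control $\|u_n\|$ directly, one can pass to a Cerami sequence and use (F3) with a splitting of $\R^N$ into the set where $|u_n|$ is large and its complement, exactly as in the references cited.

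The main obstacle is the lack of compactness, since the embedding $H^1(\R^N) \hookrightarrow L^p(\R^N)$ fails and the weak limit of $(u_n)$ may be zero. I would rule out vanishing by the classical Lions lemma: if $\sup_{y \in \R^N} \int_{B(y,1)} u_n^2 \, dx \to 0$, then $u_n \to 0$ in $L^p(\R^N)$ for every $p \in (2, 2^*)$, and using Remark~\ref{rem:1.1} together with (F1) this forces $\int_{\R^N} F(x,u_n)\,dx \to 0$ and $\int_{\R^N} f(x,u_n)u_n\,dx \to 0$, whence $\|u_n\|^2 \to 0$ and $c = \lim \cJ(u_n) = 0$, a contradiction. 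Hence there exist $\delta > 0$ and points $z_n \in \R^N$ with $\int_{B(z_n,1)} u_n^2 \, dx \geq \delta$; up to replacing $z_n$ by the nearest lattice point, we may take $z_n \in \Z^N$, and by (V) and (F1) the translated sequence $\tilde u_n(x) := u_n(x-z_n)$ is still a Palais–Smale sequence at level $c$ with the same bounds.

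Finally, passing to a subsequence, $\tilde u_n \rightharpoonup u_0$ in $H^1(\R^N)$ with $u_0 \neq 0$ by the non-vanishing estimate and Rellich on $B(0,2)$. Standard arguments (continuity of the Nemytskii operator associated with $f$, together with the growth condition and the local $L^p$-compactness) show that $\cJ'(u_0) = 0$, so $u_0 \in \cN$ and $\cJ(u_0) \geq c$. On the other hand, by weak lower semicontinuity of $\|\cdot\|^2$ and Fatou applied to the non-negative integrand $\tfrac{1}{2}f(x,\tilde u_n)\tilde u_n - F(x,\tilde u_n)$,
\begin{equation*}
\cJ(u_0) = \cJ(u_0) - \tfrac{1}{2}\cJ'(u_0)(u_0) \leq \liminf_{n\to\infty}\left(\cJ(\tilde u_n) - \tfrac{1}{2}\cJ'(\tilde u_n)(\tilde u_n)\right) = c,
\end{equation*}
so $\cJ(u_0) = c = \inf_{\cN}\cJ$ and $u_0$ is the desired ground state.
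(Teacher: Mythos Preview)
The paper does not actually prove this theorem: it states the result as well known and refers to \cite{BM, SzW, SzW-Handbook, dPKSz} for a proof. Your sketch is a correct outline of the standard Szulkin--Weth argument contained in those references, so in that sense you are reproducing exactly what the paper defers to.

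Two small caveats, both of which you already flag. First, under (F4) alone, $f$ need not be $C^1$ in $u$, so $\cN$ is not a priori a $C^1$ manifold; the references handle this by working with the Nehari map $u \mapsto t(u)u$ as a homeomorphism from the unit sphere to $\cN$ rather than via Lagrange multipliers, and the equality of the three minimax levels still goes through. Second, the displayed identity $\cJ(u_n)-\tfrac12\cJ'(u_n)(u_n)\geq 0$ does not by itself bound $\|u_n\|$ without an Ambrosetti--Rabinowitz condition; as you note, one either uses a Cerami sequence combined with (F3), or---more in the spirit of \cite{SzW, SzW-Handbook}---takes a minimizing sequence on $\cN$ directly and obtains boundedness from the Nehari identity and (F3) by the usual contradiction argument with the normalized sequence $v_n = u_n/\|u_n\|$. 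Either route works.
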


The same methods can be applied also in the nonlocal case, and the
following existence result can be shown, see e.g. \cite{Bi, S1,
  S2}. In what follows, $r_s > 0$ is the radius chosen so that
\begin{align*}
\inf_{\|u\|_s = r_s} \cJ_s(u) > 0.
\end{align*}

\begin{Th}
  Suppose that assumptions~(N), (V), (F1)--(F4) hold and
  $1/2 < s < 1$. Then there exists a ground state solution
  $u_s \in H^s (\R^N)$ to \eqref{eq:1.1}, i.e. a critical point of the
  functional $\cJ_s$ given by \eqref{eq:2.2} such that
\begin{equation}\label{eq:5.1}
\cJ_s(u_s) = \inf_{\cN_s} \cJ_s = \inf_{u \in H^s (\R^N) \setminus \{0\}} \sup_{t \geq 0} \cJ_s(tu) = \inf_{\gamma \in \Gamma_s} \sup_{t \in [0,1]} \cJ_s(\gamma(t)),
\end{equation}
where $\cN_s$ is the corresponding Nehari manifold
\begin{align*}
\cN_s := \{ u \in H^s (\R^N) \setminus \{0\} \mid \cJ_s'(u)(u) = 0\}
\end{align*}
and
\begin{align*}
\Ga_s := \{ \gamma \in \cC([0,1], H^s (\R^N)) \mid \gamma(0)=0, \ \| \gamma(1) \| > r_s, \ \cJ_s(\gamma(1)) < 0 \}.
\end{align*}
\end{Th}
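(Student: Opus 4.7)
The plan is to prove existence via a Nehari-manifold / mountain-pass argument combined with $\Z^N$-translations to recover compactness. First I would verify the mountain-pass geometry of $\cJ_s$: using Remark~\ref{rem:1.1} together with Lemma~\ref{lemma-constant-independednt}, one obtains $\cJ_s(u) \geq \frac{1}{2}\|u\|_s^2 - \varepsilon C_1 \|u\|_s^2 - C_\varepsilon C_2 \|u\|_s^p$, which is positive on a suitably small sphere $\|u\|_s = r_s$, while (F3) gives $\cJ_s(tu) \to -\infty$ as $t \to +\infty$ for any $u \not\equiv 0$. The strict-monotonicity assumption (F4) then ensures that for every $u \in H^s(\R^N)\setminus\{0\}$ the map $t \mapsto \cJ_s(tu)$ has a unique positive maximizer $t_u$, that $t_u u \in \cN_s$, and that the three minimax characterizations in~\eqref{eq:5.1} all coincide with a common value $c_s > 0$.

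Next I would pick a Palais--Smale sequence $(u_n)_n$ for $\cJ_s$ at level $c_s$. Boundedness in $H^s(\R^N)$ follows by combining the identity $\cJ_s(u_n)-\frac{1}{2}\cJ_s'(u_n)(u_n) = \int_{\R^N}\bigl(\frac{1}{2}f(x,u_n)u_n - F(x,u_n)\bigr)\,dx$ with (F3)--(F4) via the standard renormalisation argument applied to $w_n = u_n/\|u_n\|_{s}$. I would then apply Lions' dichotomy: if $\sup_{y\in\R^N}\int_{B(y,1)}|u_n|^2\,dx \to 0$, Theorem~\ref{unif-lions} (with the constant sequence $s_n \equiv s$) forces $u_n \to 0$ in every $L^p$, $p \in (2, 2N/(N-1))$; together with (F1) this gives $\cJ_s(u_n) \to 0$, contradicting $c_s>0$. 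Hence there exist $\delta>0$ and $y_n \in \R^N$ with $\int_{B(y_n,1)} |u_n|^2 \,dx \geq \delta$. Because $V$ and $f$ are $\Z^N$-periodic, I would replace $y_n$ by the nearest $z_n \in \Z^N$ and set $\tilde u_n := u_n(\cdot - z_n)$: then $\cJ_s(\tilde u_n) = \cJ_s(u_n)$, $\cJ_s'(\tilde u_n) \to 0$ in $(H^s)^*$, and along a subsequence $\tilde u_n \weakto u_s$ in $H^s(\R^N)$, with $u_s \neq 0$ and a.e.\ convergence guaranteed by the local compactness of the Sobolev embedding.

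Finally I would identify $u_s$ as a ground state. Testing the relation $\cJ_s'(\tilde u_n) \to 0$ against $\varphi \in C_0^\infty(\R^N)$, and using (F1) together with a.e.\ convergence to pass to the limit in the nonlinear term, shows that $\cJ_s'(u_s) = 0$; in particular $u_s \in \cN_s$, so $\cJ_s(u_s) \geq c_s$. For the reverse inequality, the integrand $\frac{1}{2}f(x,u)u - F(x,u)$ is nonnegative by Remark~\ref{rem:1.1}, hence Fatou's lemma yields
\[
\cJ_s(u_s) = \int_{\R^N} \Bigl(\frac{1}{2} f(x,u_s)u_s - F(x,u_s)\Bigr) dx \leq \liminf_{n \to \infty} \cJ_s(\tilde u_n) = c_s.
\]
The main obstacle is the loss of compactness on all of $\R^N$; it is overcome by exploiting the $\Z^N$-invariance of $\cJ_s$ to relocate the ``mass'' of $(u_n)$ to a fixed bounded region (via Theorem~\ref{unif-lions}) before extracting a weak limit. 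Assumption (F4) plays a double role here: it ensures the regularity of the Nehari manifold and the equivalence of the minimax levels, and it supplies the sign of $\frac{1}{2}f(x,u)u - F(x,u)$ used in the Fatou step.
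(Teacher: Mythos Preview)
The paper does not supply a proof of this theorem: it merely states the result and refers to \cite{Bi, S1, S2}, remarking that ``the same methods can be applied also in the nonlocal case.'' Your sketch is precisely the Szulkin--Weth Nehari-manifold argument (adapted to the fractional setting and to $\Z^N$-periodic coefficients) that those references use, so your approach coincides with what the paper invokes rather than offering an alternative route.

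One small imprecision worth tightening: in the vanishing alternative you assert that $u_n \to 0$ in $L^p$ ``together with (F1) \ldots\ gives $\cJ_s(u_n)\to 0$''. From $u_n\to 0$ in $L^p$ (and boundedness in $L^2$) you only obtain $\int_{\R^N} F(x,u_n)\,dx \to 0$, hence $\cJ_s(u_n)=\tfrac{1}{2}\|u_n\|_s^2+o(1)$, which is not yet a contradiction. You must also invoke the Palais--Smale information $\cJ_s'(u_n)(u_n)=o(1)$: since $\int_{\R^N} f(x,u_n)u_n\,dx\to 0$ as well, this forces $\|u_n\|_s\to 0$ and then $c_s=0$. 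Equivalently, and more in the spirit of your final Fatou step, write
\[
c_s+o(1)=\cJ_s(u_n)-\tfrac{1}{2}\cJ_s'(u_n)(u_n)=\int_{\R^N}\Bigl(\tfrac{1}{2}f(x,u_n)u_n-F(x,u_n)\Bigr)\,dx\to 0.
\]
With this adjustment the argument is complete and correct.
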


\section{Non-local to local transition}
For any $s \in (1/2,1)$ we define
\begin{align*}
c_s := \inf_{\cN_s} \cJ_s > 0.
\end{align*}
Similarly, we put also
\begin{align*}
c := \inf_{\cN} \cJ > 0.
\end{align*}
For any $v \in H^s (\R^N) \setminus \{0\}$ let $t_s(v) > 0$ be the
unique positive real number such that $t_s (v) \in \cN_s$. Then we put
$m_s(v) := t_s (v) v$ .
\begin{Lem}\label{lem:limsup}
	There results
\begin{align*}
\limsup_{s \to 1^-} c_s \leq c.
\end{align*}
\end{Lem}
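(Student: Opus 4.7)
The plan is to use the ground state $u_0 \in H^1(\R^N)$ of the local problem as a test element against the Nehari characterization \eqref{eq:5.1} for each $s$ close to $1$. Since $(1+|\xi|^2)^s \le 1+|\xi|^2$ for $s\in (1/2,1)$, the Fourier characterization gives $u_0 \in H^s(\R^N)$ for every such $s$, and by Corollary~\ref{cor:2.3},
\begin{align*}
\|u_0\|_s \longrightarrow \|u_0\| \quad \text{as } s \to 1^-.
\end{align*}

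First I would introduce, for each $s\in(1/2,1)$, the unique $t_s = t_s(u_0)>0$ with $t_s u_0 \in \cN_s$. By \eqref{eq:5.1} and the definition of $m_s(u_0)$,
\begin{align*}
c_s \;\leq\; \cJ_s(t_s u_0) \;=\; \sup_{t\geq 0} \cJ_s(tu_0).
\end{align*}
Consider the functions $g_s(t) := \cJ_s(tu_0) = \tfrac{t^2}{2}\|u_0\|_s^2 - \int_{\R^N} F(x,tu_0)\,dx$ and $g(t) := \cJ(tu_0)$. Since the nonlinear term does not depend on $s$, Corollary~\ref{cor:2.3} implies $g_s \to g$ uniformly on every compact interval of $[0,\infty)$.

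Next I would show that $\{t_s\}$ is bounded as $s\to 1^-$, which I expect to be the main obstacle. Suppose by contradiction that $t_s \to +\infty$ along some sequence. Since $\|u_0\|_s$ stays bounded, assumption (F3) together with the standard estimate using Fatou's lemma yields
\begin{align*}
\cJ_s(t_s u_0) \;=\; \tfrac{t_s^2}{2}\|u_0\|_s^2 - \int_{\R^N} F(x,t_s u_0)\,dx \;=\; t_s^2 \left(\tfrac{1}{2}\|u_0\|_s^2 - \int_{\R^N} \tfrac{F(x,t_s u_0)}{t_s^2 u_0^2}\,u_0^2\,dx\right) \to -\infty,
\end{align*}
which contradicts $\cJ_s(t_s u_0) \ge g_s(1) \to g(1) = c > 0$. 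Likewise $t_s$ is bounded away from $0$, since otherwise $g_s(t_s)\to 0$ while $g_s(t_s)\geq g_s(1)\to c>0$.

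Finally, given any sequence $s_n\to 1^-$ one can extract a subsequence along which $t_{s_n}\to t^*\in (0,\infty)$. Using uniform convergence of $g_{s_n}$ to $g$ on compact intervals,
\begin{align*}
\limsup_{n\to\infty} c_{s_n} \;\leq\; \lim_{n\to\infty} g_{s_n}(t_{s_n}) \;=\; g(t^*) \;\leq\; \sup_{t\geq 0} \cJ(tu_0) \;=\; \cJ(u_0) \;=\; c,
\end{align*}
where the last equality uses $u_0\in\cN$ and (F4). Since the sequence was arbitrary, $\limsup_{s\to 1^-} c_s \leq c$.
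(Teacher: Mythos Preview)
Your argument is correct and follows essentially the same path as the paper's: test with the local ground state, project it onto $\cN_s$ via $t_s$, show $\{t_s\}$ is bounded and bounded away from $0$ using (F3) and (F2), and pass to the limit. The only difference is cosmetic---the paper pins down $t^*=1$ by passing to the limit in the Nehari identity before computing the energy, whereas you skip this and bound $g(t^*)\le\sup_{t\ge 0}\cJ(tu_0)=c$ directly; just take care in the final display to first select a sequence $s_n\to 1^-$ realizing $\limsup_{s\to 1^-}c_s$ and only then extract the subsequence along which $t_{s_n}$ converges.
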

\begin{proof}
  Take $u \in H^1 (\R^N) \subset H^s (\R^N)$ as a ground state
  solution of \eqref{eq:1.2}, in particular $u \in \cN$ and
  $\cJ(u) = c$, where $\cJ$ is given by \eqref{eq:2.2}. Consider the function $m_s (u) \in \cN_s$. Obviously
\begin{align*}
c_s \leq \cJ_s (m_s (u)).
\end{align*}
Hence
\begin{multline*}
  \limsup_{s \to 1^-} c_s \leq \limsup_{s \to 1^-} \cJ_s (m_s(u)) \\ =
  \limsup_{s \to 1^-}  \left\{ \cJ_s (m_s(u))  - \frac12 \cJ_s '
    (m_s(u))  (m_s(u))  \right\} \\
  = \limsup_{s \to 1^-} \left\{ \frac12 \int_{\R^N} f(x,m_s(u))m_s(u) - 2
  F(x,m_s(u)) \, dx \right\}.
\end{multline*}
Recall that $m_s(u) = t_s u$ for some real numbers $t_s > 0$. Suppose
by contradiction that $t_s \to +\infty$ as $s \to 1^-$. Then, in view
of the Nehari identity
\begin{align*}
\| u \|_{s}^2 = \int_{\R^N} \frac{f(x,t_s u)}{t_s^2} t_s u \, dx
\geq 2 \int_{\R^N} \frac{F(x,t_s u)}{t_s^2 u^2} u^2 \, dx \to +\infty,
\end{align*}
but the left-hand side stays bounded (see Corollary \ref{cor:2.3}). Hence $(t_s)_s$ is bounded. Take any
convergent subsequence $(t_{s_n})$ of $(t_s)$, i.e. $t_{s_n} \to t_0$ as $n\to+\infty$. Obviously $t_0 \geq 0$. We will show that $t_0 \neq 0$. Indeed, suppose that $t_0 = 0$, i.e. $t_{s_n} \to 0$. Then, in view of the Nehari identity
\begin{align*}
\| u \|_{s_n}^2 = \int_{\R^N} \frac{f(x,t_{s_n} u)}{t_{s_n} u} u^2 \, dx.
\end{align*}
By Corollary \ref{cor:2.3}, $\| u \|_{s_n}^2 \to \|u\|^2 > 0$. Hence, in view of (F2),
\begin{align*}
\|u\|^2 + o(1) = \int_{\R^N} \frac{f(x,t_{s_n} u)}{t_{s_n} u} u^2 \, dx \to 0,
\end{align*}
a contradiction. Hence $t_0 > 0$. Again, by Corollary \ref{cor:2.3},
\begin{align*}
t_{s_n}^2 \| u \|_{s_n}^2 \to t_0^2 \|u \|^2 \quad \mathrm{as} \ n\to +\infty.
\end{align*}
Moreover, in view of Remark \ref{rem:1.1},
\begin{align*}
 \left| f(x,t_{s_n} u) t_{s_n} u \right| \leq  \varepsilon t_{s_n}^2 |u|^2 + C_\varepsilon t_{s_n}^p |u|^p \leq C ( |u|^2 + |u|^p)
\end{align*}
for some constant $C > 0$, independent of $n$. In view of the Lebesgue's convergence theorem,
\begin{align*}
\int_{\R^N} f(x,t_{s_n} u) t_{s_n} u \, dx \to \int_{\R^N} f(x,t_0 u) t_0 u \, dx.
\end{align*}
Thus the limit $t_0$ satisfies
\begin{align*}
t_0^2 \|u\|^2 = \int_{\R^N} f(x,t_0 u) t_0 u \, dx.
\end{align*}
Taking the Nehari identity into account we see that $t_0 = 1$. Hence
$t_s \to 1$ as $s \to 1^-$. Repeating the same argument we see that
\begin{multline*}
\limsup_{s \to 1^-} \left\{ \frac12 \int_{\R^N} f(x,m_s(u))m_s(u) - 2
  F(x,m_s(u)) \, dx \right\} \\
  = \frac12 \int_{\R^N} f(x,u)u - 2 F(x,u)
                               \, dx = \cJ(u) = c
\end{multline*}
and the proof is completed.
\end{proof}
\begin{Lem}\label{lemma-boundedness}
There exists a constant~$M > 0$ such that
\begin{align*}
\|u_s\|_{L^2 (\R^N)} + \|u_s\|_{s} + \|u_s\|_{L^\frac{2N}{N-1} (\R^N)} \leq M
\end{align*}
for every $s \in (1/2,1)$.
\end{Lem}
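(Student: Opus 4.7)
The plan is to argue by contradiction. By Lemma~\ref{lem:limsup}, there exists $s_0 \in (1/2, 1)$ such that $c_s \leq c+1$ for every $s \in (s_0, 1)$; since $\|u_s\|_{L^2(\R^N)}$ and $\|u_s\|_{L^{2N/(N-1)}(\R^N)}$ are controlled by $\|u_s\|_s$ via Lemma~\ref{lemma-constant-independednt}, it suffices to bound $\|u_s\|_s$. Assume, towards a contradiction, that there is a sequence $s_n \to 1^-$ with $t_n := \|u_{s_n}\|_{s_n} \to +\infty$, and set $w_n := u_{s_n}/t_n$, so $\|w_n\|_{s_n} = 1$ for every $n$.

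From the Nehari/mountain-pass characterization \eqref{eq:5.1}, the function $t \mapsto \cJ_{s_n}(t w_n)$ attains its maximum on $[0,+\infty)$ at $t = t_n$ with value $\cJ_{s_n}(u_{s_n}) = c_{s_n}$; hence for every $t \geq 0$
\begin{equation*}
c_{s_n} \;\geq\; \cJ_{s_n}(t w_n) \;=\; \tfrac12 t^2 - \int_{\R^N} F(x, t w_n)\, dx.
\end{equation*}
I would then apply Theorem~\ref{unif-lions} with $q=2$ to $\{w_n\}_n$ and split into two cases. In the \emph{vanishing} case $\sup_{y \in \R^N} \int_{B(y,1)} |w_n|^2 \, dx \to 0$, Theorem~\ref{unif-lions} yields $w_n \to 0$ in $L^p(\R^N)$ for every $p \in (2, 2N/(N-1))$; combined with Remark~\ref{rem:1.1} (the pointwise estimate $|F(x, tw_n)| \leq \varepsilon t^2 |w_n|^2 + C_\varepsilon t^p |w_n|^p$) and the uniform $L^2$ bound on $\{w_n\}_n$ furnished by Lemma~\ref{lemma-constant-independednt}, this forces $\int_{\R^N} F(x, t w_n)\, dx \to 0$ for every fixed $t$, whence $\liminf_n c_{s_n} \geq t^2/2$. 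Letting $t \to +\infty$ contradicts $c_{s_n} \leq c+1$.

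In the \emph{non-vanishing} case there exist $\delta > 0$ and $y_n \in \R^N$ with $\int_{B(y_n, 1)} |w_n|^2 \, dx \geq \delta$. Writing $y_n = z_n + \xi_n$ with $z_n \in \Z^N$ and $\xi_n \in [0,1)^N$, and exploiting the $\Z^N$-periodicity of $V$ and $f$, the translates $\tilde w_n(\cdot) := w_n(\cdot + z_n)$ and $\tilde u_{s_n}(\cdot) := u_{s_n}(\cdot + z_n)$ still satisfy $\|\tilde w_n\|_{s_n} = 1$ and $\cJ_{s_n}(\tilde u_{s_n}) = c_{s_n}$. Passing to a subsequence with $\xi_n \to \xi_0$, Theorem~\ref{local-compactness} produces $w \in H^1(\R^N)$ with $\tilde w_n \to w$ pointwise a.e.\ and in $L^2_{\mathrm{loc}}(\R^N)$; the assumed lower bound gives $\int_{B(\xi_0, 2)} |w|^2 \geq \delta$, so $w \not\equiv 0$. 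On $E := \{x \in \R^N : w(x) \neq 0\}$ one has $|\tilde u_{s_n}(x)| = t_n |\tilde w_n(x)| \to +\infty$ pointwise a.e. Dividing $c_{s_n} = \cJ_{s_n}(\tilde u_{s_n})$ by $t_n^2$ and using $\|\tilde w_n\|_{s_n}=1$ yields
\begin{equation*}
\frac{c_{s_n}}{t_n^2} \;=\; \frac12 - \int_{\R^N} \frac{F(x, \tilde u_{s_n})}{\tilde u_{s_n}^2}\, \tilde w_n^2 \, dx,
\end{equation*}
with the integrand set to $0$ wherever $\tilde u_{s_n} = 0$. The left-hand side tends to $0$, while Fatou's lemma (the integrand is non-negative since $F \geq 0$ by Remark~\ref{rem:1.1}) together with (F3) forces the integral on the right to diverge to $+\infty$, a contradiction.

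The main technical obstacle is the non-vanishing case: one must carefully combine $\Z^N$-translation invariance with the uniform-in-$s$ local compactness given by Theorem~\ref{local-compactness} to extract a non-trivial $H^1$ limit $w$ along which the superquadraticity hypothesis (F3) can be activated. Once $\|u_s\|_s$ is bounded, the $L^2(\R^N)$ and $L^{2N/(N-1)}(\R^N)$ bounds follow immediately from Lemma~\ref{lemma-constant-independednt} applied with $q=2$ and $q=2N/(N-1)$, respectively.
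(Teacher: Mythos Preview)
Your proof is correct and follows essentially the same strategy as the paper: reduce to bounding $\|u_s\|_s$, normalize to $w_n = u_{s_n}/\|u_{s_n}\|_{s_n}$, split into vanishing and non-vanishing alternatives via Theorem~\ref{unif-lions}, use the Nehari maximality $c_{s_n} \ge \cJ_{s_n}(t w_n)$ in the vanishing case, and in the non-vanishing case translate by $\Z^N$, extract a nontrivial a.e.\ limit via Theorem~\ref{local-compactness}, and apply (F3) with Fatou's lemma to drive $\cJ_{s_n}(u_{s_n})/\|u_{s_n}\|_{s_n}^2 \to -\infty$. The only cosmetic differences are that you invoke Lemma~\ref{lem:limsup} explicitly to bound $c_{s_n}$ (the paper uses this implicitly) and you spell out the decomposition $y_n = z_n + \xi_n$ with $\xi_n \in [0,1)^N$ where the paper passes directly to balls of radius $1+\sqrt{N}$ centered at lattice points.
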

\begin{proof}
  Note that $\|u_s\|_{L^2 (\R^N)} + \|u_s\|_{L^\frac{2N}{N-1} (\R^N)} \leq C \|u_s\|_{s}$, for some $C > 0$ independent of $s$. So it is enough to
  show that $\|u_s\|_{s} \leq M$. Suppose by contradiction that
\begin{align*}
\|u_s\|_{s} \to +\infty \quad \mathrm{as} \ s \to 1^-.
\end{align*}
Put $v_s := \frac{u_s}{\|u_s\|_s}$. Then $\|v_s\|_{s} = 1$. In particular, $\{v_s\}$ is bounded in $L^2 (\R^N)$. Suppose that
\begin{equation}\label{eq:lions}
\sup_{y \in \R^N} \int_{B(y,1)} |v_s|^2 \, dx \to 0
\end{equation}
Then $v_s \to 0$ in $L^p (\R^N)$. Fix any $t > 0$. By \eqref{eq:5.1} we obtain
\begin{align*}
\cJ_{s_n} (u_{s_n}) \geq \cJ_{s_n} \left( \frac{t}{\|u_{s_n}\|_{s_n}} u_{s_n} \right) = \cJ_{s_n} (t v_{s_n}) = \frac{t^2}{2} - \int_{\R^N} F(x,tv_{s_n}) \, dx.
\end{align*}
From Remark \ref{rem:1.1} we see that
\begin{multline*}
\int_{\R^N} F(x,tv_{s_n}) \, dx \leq \varepsilon t^2 \| v_{s_n}\|_{L^2(\R^N)}^2 + C_\varepsilon t^p \|v_{s_n}\|_{L^p (\R^N)}^p \\
\to \varepsilon t^2 \limsup_{n \to \infty} \| v_{s_n}\|_{L^2(\R^N)}^2
\end{multline*}
for every $\varepsilon > 0$. Thus $\int_{\R^N} F(x,tv_{s_n}) \, dx  \to 0$ and for any $t > 0$
\begin{align*}
\cJ_{s_n} (u_{s_n}) \geq \frac{t^2}{2} + o(1),
\end{align*}
which is a contradiction with the boundedness of $\{\cJ_{s_n}(u_{s_n})\}_n$. Hence \eqref{eq:lions} does not hold, i.e. there is a sequence $\{z_n\} \subset \Z^N$ such that
\begin{align*}
\liminf_{n\to\infty} \int_{B(z_n,1+\sqrt{N})} |v_n|^2 \, dx > 0.
\end{align*}
or, equivalently
\begin{align*}
\liminf_{n\to\infty} \int_{B(0,1+\sqrt{N})} |v_n(x-z_n)|^2 \, dx > 0.
\end{align*}
From Theorem \ref{local-compactness}, $v_n (\cdot - z_n) \to v_0$ in $L^2_{\mathrm{loc}} (\R^N)$ and pointwise a.e., moreover $v_0 \neq 0$. See that, for a.e. $x \in \supp v_0$ we have
\begin{align*}
|u_{s_n} (x-z_n)| = \| u_{s_n} \|_{s_n} | v_{s_n} (x - z_n) | \to +\infty.
\end{align*}
Thus
\begin{align*}
o(1) = \frac{\cJ_{s_n} (u_{s_n})}{\|u_{s_n}\|_{s_n}^2} &= \frac12 - \int_{\R^N} \frac{F(x, u_{s_n})}{u_{s_n}^2} v_{s_n}^2 \, dx \\ &= \frac12 - \int_{\R^N} \frac{F(x, u_{s_n}(x-z_n))}{u_{s_n}(x-z_n)^2} v_{s_n}(x-z_n)^2 \, dx \\
&\leq \frac12 - \int_{\supp v_0} \frac{F(x, u_{s_n}(x-z_n))}{u_{s_n}(x-z_n)^2} v_{s_n}(x-z_n)^2 \, dx \to -\infty,
\end{align*}
a contradiction.
\end{proof}

\begin{Lem}\label{neh-bounded}
Since $u_s \in \cN_s$ there is (independent of $s$) constant $\rho$ such that
\begin{align*}
\|u_s\|_{s} \geq \rho > 0.
\end{align*}
\end{Lem}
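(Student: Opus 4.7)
The plan is to exploit the Nehari identity $\cJ_s'(u_s)(u_s)=0$ together with the $s$-uniform Sobolev embedding provided by Lemma~\ref{lemma-constant-independednt}. Since the subcritical embedding constant $C(N,q)$ does not depend on $s \in [1/2,1]$, the standard ``bubble-off'' argument for Nehari manifolds will yield a uniform-in-$s$ lower bound.

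First I would write the Nehari identity in the form
\begin{align*}
\|u_s\|_s^2 = \int_{\R^N} f(x,u_s)\, u_s\, dx.
\end{align*}
Next, using Remark~\ref{rem:1.1}, for any $\varepsilon>0$ there is $C_\varepsilon>0$ with $|f(x,u)u| \leq \varepsilon |u|^2 + C_\varepsilon |u|^p$, so
\begin{align*}
\|u_s\|_s^2 \leq \varepsilon \|u_s\|_{L^2(\R^N)}^2 + C_\varepsilon \|u_s\|_{L^p(\R^N)}^p.
\end{align*}
Then I would invoke Lemma~\ref{lemma-constant-independednt} at $q=2$ and $q=p$ (note that $p \in (2, 2N/(N-1))$ by (F1)), obtaining constants $C_2=C(N,2)$ and $C_p=C(N,p)$ independent of $s$ such that
\begin{align*}
\|u_s\|_s^2 \leq \varepsilon C_2^2 \|u_s\|_s^2 + C_\varepsilon C_p^p \|u_s\|_s^p.
\end{align*}

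Choosing $\varepsilon$ small enough that $\varepsilon C_2^2 \leq 1/2$, we may absorb the first term on the left and, dividing by $\|u_s\|_s^2>0$ (which is permitted since $u_s \neq 0$ by definition of $\cN_s$), obtain
\begin{align*}
\frac12 \leq C_\varepsilon C_p^p \|u_s\|_s^{p-2}.
\end{align*}
Since $p>2$, this gives $\|u_s\|_s \geq \rho := (2 C_\varepsilon C_p^p)^{-1/(p-2)}>0$, and $\rho$ is manifestly independent of $s$ because every constant in the chain is.

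I do not expect any real obstacle here; the only thing to be slightly careful about is that the constant $C(N,q)$ from Lemma~\ref{lemma-constant-independednt} is genuinely uniform for $s \in [1/2,1]$, which is exactly what that lemma asserts. If one wanted to avoid invoking that lemma at $q=2$, one could alternatively use the elementary bound $\|u_s\|_{L^2(\R^N)}^2 \leq (\inf V)^{-1}\|u_s\|_s^2$ from assumption~(V) and the definition \eqref{norm-s}, which is also manifestly $s$-independent.
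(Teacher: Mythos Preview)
Your proof is correct and follows essentially the same argument as the paper: both use the Nehari identity, the growth estimate from Remark~\ref{rem:1.1}, and the $s$-uniform embedding of Lemma~\ref{lemma-constant-independednt} to absorb the quadratic term and extract the lower bound $\|u_s\|_s^{p-2} \geq \mathrm{const}>0$. The only cosmetic difference is that you track the two embedding constants $C_2$, $C_p$ separately and fix $\varepsilon C_2^2 \leq 1/2$, whereas the paper bundles them into a single constant $C$ and writes the final bound as $(1-C\varepsilon)/(C C_\varepsilon)$.
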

\begin{proof}
  Since $u_s \in \mathcal{N}_s$, we can write by Remark \ref{rem:1.1}
  \begin{align*}
    \|u_s\|_{s}^2 = \int_{\R^N} f(x,u_s)u_s \, dx &\leq \varepsilon \|u_s\|_{L^2(\R^N)}^2 + C_\varepsilon \|u_s\|_{L^p(\R^N)}^p \\
                                                    &\leq C \left( \varepsilon \|u_s\|_{s}^2 + C_\varepsilon \|u_s\|_{s}^p \right)
  \end{align*}
  for a constant $C>0$ independent of $s$. Choosing $\varepsilon>0$
  small enough, we conclude that
  \begin{align*}
    \|u_s\|_{s}^{p-2} \geq \frac{1-C \varepsilon}{C \cdot C_\varepsilon} = \rho>0.
	\end{align*}
\end{proof}

\begin{Cor} \label{cor:2.9}
There exist $u_0 \in H^1 (\R^N)$, a sequence $\{z_n\}_n \subset \Z^N$ and a sequence $\{s_n\}_n$ such that $s_n \to 1^-$ and
\begin{align*}
u_{s_n} (\cdot - z_n) \to u_0 \neq 0 \quad \mathrm{in} \ L^\nu_{\mathrm{loc}} (\R^N) \quad \mathrm{as} \ n \to +\infty
\end{align*}
for all~$\nu \in [2, 2N/(N-1))$.
\end{Cor}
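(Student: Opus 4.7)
The plan is to combine the uniform bounds from Lemmas \ref{lemma-boundedness} and \ref{neh-bounded} with the uniform concentration-compactness principle (Theorem \ref{unif-lions}) and the uniform local-compactness statement (Theorem \ref{local-compactness}). First, I pick any sequence $s_n \to 1^-$ with $s_n \in (1/2,1)$ and set $u_n := u_{s_n}$. By Lemma \ref{lemma-boundedness} the sequence $\|u_n\|_{s_n}$ is bounded by some $M$ independent of $n$, while by Lemma \ref{neh-bounded} it is bounded below by $\rho > 0$.

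Next I rule out vanishing. Suppose, for contradiction, that
\[
\lim_{n \to +\infty} \sup_{y \in \R^N} \int_{B(y,1)} |u_n|^2 \, dx = 0.
\]
Theorem \ref{unif-lions}, applied with $q=2$, yields $u_n \to 0$ in $L^p(\R^N)$ for every $p \in (2, 2N/(N-1))$. Testing the Nehari identity $u_n \in \cN_{s_n}$ as in the proof of Lemma \ref{neh-bounded}, and using Remark \ref{rem:1.1} with some fixed $p \in (2, 2N/(N-1))$,
\[
\|u_n\|_{s_n}^2 = \int_{\R^N} f(x,u_n) u_n \, dx \leq \varepsilon \|u_n\|_{L^2(\R^N)}^2 + C_\varepsilon \|u_n\|_{L^p(\R^N)}^p.
\]
Since $\|u_n\|_{L^2(\R^N)}$ is bounded (Lemma \ref{lemma-boundedness}) and $\|u_n\|_{L^p(\R^N)} \to 0$, the right-hand side can be made arbitrarily small, contradicting $\|u_n\|_{s_n} \geq \rho$. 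Hence there exist $\delta > 0$ and points $y_n \in \R^N$ with $\int_{B(y_n,1)} |u_n|^2 \, dx \geq \delta$ for all sufficiently large $n$.

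Now I exploit $\Z^N$-periodicity. Choose $z_n \in \Z^N$ with $|y_n - z_n| \leq \sqrt{N}$, so that $B(y_n,1) \subset B(z_n, 1+\sqrt{N})$, and set $\tilde u_n := u_n(\cdot - z_n)$. Since $V$ and $f$ are $\Z^N$-periodic, $\tilde u_n \in \cN_{s_n}$, $\cJ_{s_n}(\tilde u_n) = \cJ_{s_n}(u_n)$, and in particular $\|\tilde u_n\|_{s_n} = \|u_n\|_{s_n} \leq M$. Moreover,
\[
\int_{B(0, 1+\sqrt{N})} |\tilde u_n|^2 \, dx = \int_{B(z_n, 1+\sqrt{N})} |u_n|^2 \, dx \geq \delta.
\]

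Finally, I apply Theorem \ref{local-compactness} to $\{\tilde u_n\}_n$: along a subsequence (which I relabel), there exists $u_0 \in H^1(\R^N)$ with $\tilde u_n \to u_0$ in $L^\nu_{\mathrm{loc}}(\R^N)$ for every $\nu \in [2, 2N/(N-1))$ and pointwise almost everywhere. Passing to the limit in the lower bound above gives $\int_{B(0,1+\sqrt{N})} |u_0|^2 \, dx \geq \delta > 0$, so $u_0 \neq 0$. The main technical point is the non-vanishing step: ensuring that the summability exponent $p$ in the superlinear term is in the range covered by Theorem \ref{unif-lions}, which is guaranteed by assumption (F1); once that is in place, periodicity reduces the translations to $\Z^N$ and Theorem \ref{local-compactness} does the rest.
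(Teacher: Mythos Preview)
Your proof is correct and follows essentially the same route as the paper: uniform boundedness (Lemma~\ref{lemma-boundedness}), exclusion of vanishing via Theorem~\ref{unif-lions} and the Nehari identity together with the lower bound of Lemma~\ref{neh-bounded}, reduction of the translations to $\Z^N$ by periodicity, and then Theorem~\ref{local-compactness} to extract a nontrivial $H^1$ limit. The only organizational difference is that the paper first applies Theorem~\ref{local-compactness} to the untranslated sequence and takes $z_n=0$ if that limit is already nonzero, whereas you go straight to the Lions dichotomy; your version is slightly leaner and equally valid (one minor quibble: the negation of vanishing gives the mass lower bound along a subsequence rather than for all large $n$, but since you relabel subsequences anyway this is harmless).
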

\begin{proof}
From Lemma \ref{lemma-boundedness} and Theorem \ref{local-compactness} we note that
\begin{align*}
u_{s_n} \to u_0 \quad \mathrm{in} \ L^\nu_{\mathrm{loc}} (\R^N) \quad \mathrm{as} \ n \to +\infty
\end{align*}
for all~$\nu \in [2, 2N/(N-1))$. If $u_0 \neq 0$, we can take $z_n = 0$ and the proof is completed. Otherwise $u_{s_n} \to 0$ in $L^2_{\mathrm{loc}} (\R^N)$ and therefore, $u_{s_n} (x) \to 0$ for a.e. $x \in \R^N$. Assume that
\begin{align*}
\sup_{y \in \R^N} \int_{B(y,1)} |u_{s_n}|^2 \, dx \to 0.
\end{align*}
Then from Theorem \ref{unif-lions} we know that $u_{s_n} \to 0$ in $L^\nu (\R^N)$ for all~$\nu \in [2, 2N/(N-1))$. Then
\begin{align*}
\int_{\R^N} f(x, u_{s_n}) u_{s_n} \, dx \to 0
\end{align*}
and $\|u_{s_n}\|_{s_n}^2 = \int_{\R^N} f(x, u_{s_n}) u_{s_n} \, dx \to 0$, which is a contradiction with Lemma \ref{neh-bounded}. Hence there is a sequence $\{ z_n \} \subset \Z^N$ such that
\begin{equation}\label{liminf}
\liminf_{n \to +\infty} \int_{B(0,1+\sqrt{N})} | u_{s_n} (\cdot - z_n)|^2 \, dx > 0.
\end{equation}
Moreover $\|u_{s_n} (\cdot - z_n)\|_{s_n} = \| u_{s_n} \|_{s_n}$, so that $\|u_{s_n} (\cdot - z_n)\|_{s_n}$ is bounded (see Lemma \ref{lemma-boundedness}). Hence, in view of Theorem \ref{local-compactness}
\begin{align*}
u_{s_n (\cdot - z_n)} \to \tilde{u}_0 \quad \mathrm{in} \ L^\nu_{\mathrm{loc}} (\R^N) \quad \mathrm{as} \ n \to +\infty
\end{align*}
for some $\tilde{u}_0$. Moreover, in view of \eqref{liminf}, $\tilde{u}_0 \neq 0$.
\end{proof}
\begin{Lem}\label{is-solution}
The limit $u_0 \in H^1 (\R^N) \setminus \{0\}$ is a  weak solution for \eqref{eq:1.2}.
\end{Lem}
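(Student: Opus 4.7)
My plan is to pass to the limit $n\to\infty$ in the fractional weak formulation, tested against $\varphi\in C_0^\infty(\R^N)$. Set $\tilde u_n:=u_{s_n}(\cdot-z_n)$. By the $\Z^N$-periodicity of $V$ and $f$, each $\tilde u_n$ is itself a weak solution of \eqref{eq:1.1} with $s=s_n$, so
\begin{equation*}
\langle (-\Delta)^{s_n/2}\tilde u_n,(-\Delta)^{s_n/2}\varphi\rangle_{L^2(\R^N)}+\int_{\R^N} V(x)\tilde u_n\varphi\,dx=\int_{\R^N} f(x,\tilde u_n)\varphi\,dx
\end{equation*}
for every $\varphi\in C_0^\infty(\R^N)\subset H^{s_n}(\R^N)$. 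Establishing the local identity for every such $\varphi$ is enough, as $C_0^\infty(\R^N)$ is dense in $H^1(\R^N)$; non-triviality of $u_0$ is already in Corollary \ref{cor:2.9}.

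The potential and nonlinear terms should be routine. For the potential, $V\in L^\infty$, $\varphi$ is compactly supported and $\tilde u_n\to u_0$ in $L^2_{\mathrm{loc}}$ by Corollary \ref{cor:2.9}, so $\int V\tilde u_n\varphi\,dx\to\int V u_0\varphi\,dx$. For the nonlinearity, the growth bound of Remark \ref{rem:1.1} with $p\in(2,2N/(N-1))$, combined with the convergence $\tilde u_n\to u_0$ in $L^p(\supp\varphi)$ and pointwise a.e., gives a uniform $L^{p/(p-1)}$-bound on $f(\cdot,\tilde u_n)$ over $\supp\varphi$; Vitali's convergence theorem then yields $\int f(x,\tilde u_n)\varphi\,dx\to\int f(x,u_0)\varphi\,dx$.

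The key step, and the one I expect to be the main obstacle, is the fractional bilinear form, because we cannot simply pass to the limit inside $\langle(-\Delta)^{s_n/2}\tilde u_n,(-\Delta)^{s_n/2}\varphi\rangle$ (the operator $(-\Delta)^{s_n/2}$ itself depends on $n$). The idea is to move all the $s$-dependence onto the smooth test function. By Plancherel,
\begin{equation*}
\langle(-\Delta)^{s_n/2}\tilde u_n,(-\Delta)^{s_n/2}\varphi\rangle_{L^2(\R^N)}=\int_{\R^N}\tilde u_n\,(-\Delta)^{s_n}\varphi\,dx,
\end{equation*}
and I split this as
\begin{equation*}
\int_{\R^N}\tilde u_n\,(-\Delta)\varphi\,dx+\int_{\R^N}\tilde u_n\bigl[(-\Delta)^{s_n}-(-\Delta)\bigr]\varphi\,dx.
\end{equation*}
The remainder is bounded by $\|\tilde u_n\|_{L^2(\R^N)}\,\|[(-\Delta)^{s_n}-(-\Delta)]\varphi\|_{L^2(\R^N)}$, which vanishes thanks to the $L^2$-convergence lemma for $(-\Delta)^s\varphi$ proved in Section \ref{sect:variational-setting} together with the uniform $L^2$-bound from Lemma \ref{lemma-boundedness}. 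In the first summand, $(-\Delta)\varphi$ is smooth and compactly supported, so the local $L^2$-convergence $\tilde u_n\to u_0$ gives $\int\tilde u_n(-\Delta)\varphi\,dx\to\int u_0(-\Delta)\varphi\,dx$; since $u_0\in H^1(\R^N)$ by Theorem \ref{local-compactness}, integration by parts rewrites this as $\int\nabla u_0\cdot\nabla\varphi\,dx$. Assembling the three limits gives the weak formulation of \eqref{eq:1.2} on $C_0^\infty(\R^N)$, and density finishes the argument.
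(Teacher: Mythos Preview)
Your proposal is correct and follows essentially the same route as the paper: shift the fractional derivatives onto the test function (the paper cites \cite{W} for the identity you obtain via Plancherel), split off the remainder $[(-\Delta)^{s_n}-(-\Delta)]\varphi$ and control it by the $L^2$-convergence lemma together with the uniform $L^2$-bound, and treat the potential and nonlinear terms by local convergence and Vitali's theorem, respectively. You are in fact slightly more careful than the paper in making the $\Z^N$-translation $\tilde u_n=u_{s_n}(\cdot-z_n)$ explicit before passing to the limit.
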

\begin{proof}
Take any test function $\varphi \in C_0^\infty (\R^N)$ and note that by \cite[Section 6]{W}
we have
\begin{align*}
\int_{\R^N} (-\Delta)^{s/2} u_{s_n} (-\Delta)^{s_n /2} \varphi \, dx = \int_{\R^N} u_{s_n} (-\Delta)^{s_n} \varphi \, dx.
\end{align*}
Moreover
\begin{multline*}
\left| \int_{\R^N} u_{s_n} (-\Delta)^{s_n} \varphi \, dx - \int_{\R^N} u_0 (-\Delta \varphi) \, dx \right|
 \\ = \left| \int_{\R^N} u_{s_n} \left( (-\Delta)^{s_n} \varphi - (-\Delta \varphi) \right) \, dx + \int_{\supp \varphi} (u_{s_n} - u_0) (-\Delta \varphi) \, dx \right| \\
\leq \left\|u_{s_n} \right\|_{L^2(\R^N)} \left\| (-\Delta)^{s_n} \varphi - (-\Delta \varphi) \right\|_{L^2(\R^N)} \\
{}+ \|  (-\Delta \varphi) \|_{L^2(\R^N)} \|u_{s_n} - u_0\|_{L^2(\supp \varphi)} \to 0.
\end{multline*}
Hence
\begin{align*}
\lim_{n \to +\infty} \int_{\R^N} (-\Delta)^{s_n /2} u_{s_n} (-\Delta)^{s_n /2} \varphi \, dx &= \int_{\R^N} u_0 (-\Delta \varphi) \, dx \\
&= \int_{\R^N} \nabla u_0 \cdot \nabla \varphi \, dx.
\end{align*}
Obviously
\begin{align*}
\lim_{n \to +\infty} \int_{\R^N} V(x) u_{s_n} \varphi \, dx = \lim_{n \to +\infty} \int_{\supp \varphi} V(x) u_{s_n} \varphi \, dx= \int_{\R^N} V(x) u_0 \varphi \, dx.
\end{align*}
Take any measurable set $E \subset \supp \varphi$ and note that, taking into account Remark \ref{rem:1.1},
\begin{multline*}
\int_E | f(x,u_{s_n}) \varphi | \, dx \leq \varepsilon \| u_{s_n} \|_{L^2 (\R^N)} \| \varphi \chi_E \|_{L^2 (\supp \varphi)} \\
{}+ C_\varepsilon \| u_{s_n} \|_{L^p (\R^N)}^{p-1} \| \varphi \chi_E \|_{L^p (\supp \varphi)}.
\end{multline*}
Hence the family $\{ f(\cdot, u_{s_n}) \varphi \}_n$ is uniformly integrable on $\supp \varphi$ and in view of the Vitali convergence theorem
\begin{align*}
\lim_{n \to +\infty} \int_{\R^N} f(x,u_{s_n})\varphi \, dx = \int_{\R^N} f(x,u_0)\varphi \, dx.
\end{align*}
Therefore $u_0$ satisfies
\begin{align*}
\int_{\R^N} \nabla u_0 \cdot \nabla \varphi \, dx + \int_{\R^N} V(x) u_0 \varphi \, dx = \int_{\R^N} f(x,u_0)\varphi \, dx,
\end{align*}
i.e. $u_0$ is a weak solution to \eqref{eq:1.2}.
\end{proof}

\begin{proof}[Proof of Theorem \ref{th:main}]
Recalling Corollary \ref{cor:2.9} and Lemma \ref{is-solution}, it is sufficient to check that $u_0$ is a ground state solution, i.e. $\cJ(u_0) = c$. From Lemma \ref{is-solution} it follows that $u_0 \in H^1 (\R^N) \setminus \{0\}$ is a weak solution, so that $u_0 \in \cN$. Note that, from Corollary \ref{cor:2.9} and Fatou's lemma,
\begin{align*}
\liminf_{n \to +\infty} c_{s_n} &= \liminf_{n \to +\infty} \cJ_{s_n} (u_{s_n}) = \liminf_{n \to +\infty} \left\{ \cJ_{s_n} (u_{s_n}) - \frac12 \cJ_{s_n} ' (u_{s_n})(u_{s_n}) \right\} \\
&= \liminf_{n \to +\infty} \left\{ \frac{1}{2} \int_{\R^N} f(x,u_{s_n})u_{s_n} -
  2 F(x,u_{s_n}) \, dx  \right\} \\
  &=  \liminf_{n \to +\infty} \left\{ \frac{1}{2} \int_{\R^N} f(x,u_{s_n}(\cdot - z_n))u_{s_n}(\cdot - z_n) -
  2 F(x,u_{s_n}(\cdot - z_n)) \, dx  \right\} \\
&\geq  \frac12 \int_{\R^N} f(x,u_0)u_0 - 2 F(x,u_0) \, dx = \cJ(u_0) \geq c.
\end{align*}
Taking into account Lemma \ref{lem:limsup} we see that
\begin{align*}
c \leq \cJ(u_0) \leq \liminf_{n \to +\infty} c_{s_n} \leq \limsup_{n \to +\infty} c_{s_n} \leq c
\end{align*}
Hence $\lim_{n \to +\infty} c_{s_n}$ exists and $\lim_{n \to +\infty} c_{s_n} = c = \cJ(u_0)$.
\end{proof}

\section*{Acknowledgements}

The authors would like to thank an anonymous referee for several valuable comments helping to improve the original version of the manuscript. Bartosz Bieganowski was partially supported by the National Science Centre, Poland (Grant No. 2017/25/N/ST1/00531). Simone Secchi is member of the \emph{Gruppo Nazionale per l'Analisi Ma\-te\-ma\-ti\-ca, la Probabilit\`a e le loro Applicazioni} (GNAMPA) of the {\em Istituto Nazionale di Alta Matematica} (INdAM).


\begin{thebibliography}{99}

\bibitem{B} O. G. Bakunin, \textit{Turbulence and Diffusion: Scaling Versus Equations}, Springer, Berlin, 2008.

\bibitem{BHS} U. Biccari, V. Hern\'andez-Santamar\'{\i}a, \emph{The Poisson equation from non-local to local}, Electronic Journal of Differential Equations Vol. \textbf{2018} (2018), no. 145, 1--13.

\bibitem{Bi} B. Bieganowski, \textit{Solutions of the fractional Schr\"{o}dinger equation with a sign-changing nonlinearity}, J. Math. Anal. Appl. \textbf{450}, Issue 1 (2017), 461--479.

\bibitem{BM} B. Bieganowski, J. Mederski, \textit{Nonlinear Schr\"{o}dinger equations with sum of periodic and vanishing potentials and sign-changing nonlinearities}, Commun. Pure Appl. Anal., Vol. \textbf{17}, Issue 1 (2018), 143--161.

\bibitem{BS} B. Bieganowski, S. Secchi, \textit{Non-local to local transition for ground states of fractional Schr\"odinger equations on bounded domains}, to appear in Topol. Meth. Nonlin. Anal.

\bibitem{BC} J.P. Borthagaray, P. Ciarlet Jr., \emph{On the convergence in the \(H^1\)-norm for the fractional laplacian}, arXiv:1810.07645.

\bibitem{BBM} J. Bourgain, H. Brezis, P. Mironescu, \textit{Another
    look at Sobolev spaces}, Optimal Control and Partial Differential
  Equations, 439--455, IOS, Amsterdam, 2001.
  
\bibitem{Brezis} H. Brezis. Functional analysis, Sobolev spaces and partial differential equations. Springer, 2011.

\bibitem{KT} A. Cotsiolis, N. Tavoularis, \emph{Best constants for
      Sobolev inequalities for higher order fractional derivatives},
    J. Math. Anal. Appl. \textbf{295} (2004), 225--236.

\bibitem{DNPV} E. Di Nezza, G. Palatucci, E. Valdinoci,
    \emph{Hitchhiker's guide to the fractional {S}obolev spaces},
    Bull. Sci. Math. \textbf{136} (2012), 521--573.

\bibitem{DPV}    S. Dipierro, G. Palatucci, E. Valdinoci, \textit{Dislocation dynamics in crystals: A macroscopic theory in a fractional Laplace
setting}, Comm. Math. Phys. \textbf{333} (2015), no. 2, 1061--1105.

\bibitem{dPKSz} F.O. de Paiva, W. Kryszewski, A. Szulkin, \textit{Generalized Nehari manifold and semilinear Schr\"odinger equation with weak monotonicity condition on the nonlinear term}, Proc. Am. Math. Soc. \textbf{145} (2017), no. 11, 4783--4794.

\bibitem{G} G. Gilboa, S. Osher, \textit{Nonlocal operators with applications to image processing}, Multiscale Model. Simul. \textbf{7} (2008),
no. 3, 1005--1028.

\bibitem{MBRS} G. Molica Bisci, V. Radulescu, R. Servadei, \textit{Variational methods for nonlocal fractional problems}. Encyclopedia of Mathematics and Its Applications 162, Cambridge University Press, 2016.

\bibitem{S1} S. Secchi, \textit{Ground state solutions for nonlinear fractional Schr\"odinger equations in $\R^N$}, Journal of Mathematical Physics \textbf{54}, 031501 (2013).

\bibitem{S2} S. Secchi, \textit{On fractional Schrödinger equations in $\R^N$ without the Ambrosetti-Rabinowitz condition}, Topol. Methods Nonlinear Anal. \textbf{47} (2016), 19--41.

\bibitem{SzW} A. Szulkin, T. Weth, \textit{Ground state solutions for some indefinite variational problems}, J. Funct. Anal. \textbf{257} (2009), no. 12, 3802--3822.

\bibitem{SzW-Handbook} A. Szulkin, T. Weth, \textit{The method of Nehari manifold}, in: David Yang Gao, Dumitru Motreanu [ed.], Handbook of Nonconvex Analysis and Applications, Boston: International Press, 2010, 597--632.

\bibitem{V} J.L. V\'azquez, \textit{Nonlinear diffusion with fractional Laplacian operators}, in: Nonlinear Partial Differential Equations (Oslo
2010), Abel Symp. 7, Springer, Heidelberg (2012), 271--298.

\bibitem{W} M. Warma, \textit{The fractional relative capacity and the
  fractional Laplacian with Neumann and Robin boundary conditions on
  open sets}, Potential Anal. \textbf{42} (2015), no. 2, 499--547.


\end{thebibliography}
\end{document}